\documentclass[a4paper,10pt]{article}
\usepackage[utf8]{inputenc}
\usepackage[T1]{fontenc}
\usepackage{lmodern}
\usepackage{textcmds}
\usepackage{epsfig}
\usepackage{amsfonts}
\usepackage{amsthm}
\usepackage{latexsym}
\usepackage{amsmath}
\usepackage{amscd}
\usepackage{amssymb}
\usepackage{enumerate}
\usepackage{textcomp}

\theoremstyle{plain}
\newtheorem{lm}{Lemma}[section]
\newtheorem{thm}[lm]{Theorem}
\newtheorem{cor}[lm]{Corollary}
\newtheorem{lem}[lm]{Lemma}
\newtheorem{prop}[lm]{Proposition}

\theoremstyle{definition}

\newtheorem{defi}[lm]{Definition}

\theoremstyle{remark}
\newtheorem{rem}[lm]{Remark}
\newtheorem{note}[lm]{Notation}
\newtheorem{case}{Case}

\DeclareMathOperator{\A}{\mathrm{Aut}}
\DeclareMathOperator{\G}{\mathrm{GL}}
\DeclareMathOperator{\PG}{\mathrm{PGL}}


\title{A note on the structure theory of groups acting on the Bruhat Tits tree associated to $\PG_{2}(F)$}
\author{Inder Kaur\footnote {This note is part of the author's master's thesis for Humboldt Universit\"{a}t zu Berlin, submitted on 23rd of February, 2012}}
\date {}
\begin{document}

\maketitle

\begin{abstract}
In this note we prove the classical decomposition theorems for certain subgroups of the automorphism group of the Bruhat-Tits tree associated to $\PG_{2}(F)$, where $F$ is a local field. 
The result can be applied to \textit{any} closed transitive subgroup of the automorphism group of \textit{any} Bruhat-Tits tree. \vspace{5mm}\\  \emph{Keywords}: Local fields, Bruhat-Tits tree, Decomposition theorems of GL$_2$(F)\\ MSC classification: 22E50, 20G25, 20E42. \end{abstract}

\section{Introduction}
We will use the following notation throughout this article.
\begin{note}
Let $p \geq 2$ be a prime number, $F$ a local field with valuation $\nu$ and uniformizer $\varpi$. Denote by $\mathfrak{o}$ the discrete valuation ring, $\mathfrak{p}$ its maximal ideal, $U$ the group of units and by $\mathbf{k}$ the finite residue field with characteristic $p$.
Let $\G_{2}(F)$ be the group of $2 \times 2$ invertible matrices with coefficients in $F$. The finite field with $p$ elements is denoted by $\mathbb{F}_{p}$ and its algebraic closure is denoted $\overline{\mathbb{F}}_{p}$.
We denote by $X$, an arbitrary Bruhat-Tits tree and by $X_{\scriptscriptstyle{\PG_{2}(F)}}$, the Bruhat-Tits tree associated to $\PG_{2}(F)$. 
\end{note}

We equip $X$ with its natural metric (see section $3$) and denote by $\A(X)$, the isometry group of $X$.
There is a natural action of  $\G_{2}(F)$  on $X_{\scriptscriptstyle{\PG_{2}(F)}}$ and more precisely, this action induces an embedding of $\PG_{2}(F)$ into the group of isometries of the tree $X_{\scriptscriptstyle{\PG_{2}(F)}}$, denoted by $\A(X_{\scriptscriptstyle{\PG_{2}(F)}})$ (see section $3$).
The purpose of this note is to study the decomposition theorems for certain subgroups of the group $\A(X_{\scriptscriptstyle{\PG_{2}(F)}})$.

\vspace{0.2 cm}
The motivation for this work comes from representation theory. 
In $1976$, G. Ol'shanskii \cite{O} classified all irreducible smooth representations of $\A(X)$ on $\mathbb{C}$-vector spaces.  
Let us consider the tree  $X_{\scriptscriptstyle{\PG_{2}(F)}}$. We can ask the following natural question: how can one classify the irreducible smooth representations of $\A(X_{\scriptscriptstyle{\PG_{2}(F)}})$ over $\overline{\mathbb{F}}_{p}$-vector spaces?
To answer this, one idea is to classify the representations of certain subgroups $\mbox{\^{G}}^{(e)}$ (defined in section $4$) of $\A(X_{\scriptscriptstyle{\PG_{2}(F)}})$ (depending on a `ramification' parameter $e \in \mathbb{N}$) which contain $\PG_{2}(F)$.  
One hopes that the representation theory of these groups is similar to that of $\PG_{2}(F)$ as studied in \cite {B-L1} and \cite{B-L2}.  
In order for an analogy to hold, the first step is to look for similarities in the structure theory of the groups $\mbox{\^{G}}^{(e)}$ and $\PG_{2}(F)$. 
This is precisely what is covered in our main result (see Theorem \ref{main thm}); we prove the decomposition theorems of $\mbox{\^{G}}^{(e)}$ analogous to that of $\PG_{2}(F)$.

\vspace{0.2 cm}
\textit{Outline}: In section $2$ we recall the classical decompositions of $\PG_{2}(F)$ and the Bruhat-Tits tree associated to it. 
In section $3$, we discuss some basic properties of the group $\A(X)$. In particular, we define the property of weak two transitivity and formulate an equivalent criterion for any closed, transitive subgroup of $\A(X)$ to be weakly two-transitive (Proposition \ref{weak 2 transitivity}).
In section $4$, we define $\mbox{\^{G}}^{(e)}$ and prove that it is a closed subgroup of $\A(X_{\scriptscriptstyle{\PG_{2}(F)}})$. 
We show that it is weakly two-transitive using Proposition \ref{weak 2 transitivity} and later use this property to prove its classical decompositions (Theorem \ref{main thm}).

\vspace{0.2 cm}
\textit{Acknowledgements}: 
The results in this note are part of my master's thesis which was supervised by Prof. Dr. Elmar Grosse-Kl\"{o}nne. I thank him for providing the question. I am grateful to Prof. Dr. Ernst Wilhelm-Zink for his careful reading of the thesis and his positive feedback and encouragement. 
I thank the IMU Berlin Einstein Foundation Program for funding during the writing period and the Berlin Mathematical School for providing excellent working facilities.

\section{Basic Definitions}
In this section we briefly recall some basic definitions and results that will be used in the rest of the article.

\subsection{The Group $\PG_{2}(F)$}

We begin by discussing the decompositions of $\G_{2}(F)$. 

\begin{defi}\label{subgroups}
Let $G:= \mathrm{GL}_2(F)$ and $K:= \mathrm{GL}_2(\mathfrak{o})$.
The group $G$ has the following important subgroups:

\begin{enumerate}
\item The standard Borel subgroup $B := \left\{\begin{array}{lr}\left(\begin{array}{ccc} a & b \\0 & d \end{array}\right)\in G\end{array}\right\}$.
\item The unipotent radical $N$ of $B$,  $N := \left\{\begin{array}{lr}\left(\begin{array}{ccc} 1 & b \\ 0 & 1 \end{array}\right)\in G\end{array}\right\}$.
\item The subgroup $N^{\prime} := \left\{\begin{array}{lr}\left(\begin{array}{ccc} 1 & 0 \\ b & 1 \end{array}\right)\in G\end{array}\right\}$. 
\item The centre $Z$ of $G$,  $Z := \left\{\begin{array}{lr}\left(\begin{array}{ccc} a & 0 \\0 & a \end{array}\right)\in G\end{array}\right\}$.
\item The standard split maximal torus $T := \left\{\begin{array}{lr}\left(\begin{array}{ccc} a & 0 \\0 & b \end{array}\right)\in G\end{array}\right\}$.
\item The Iwahori subgroup $I := \left\{\begin{array}{lr}\left(\begin{array}{ccc} a & b \\c & d \end{array}\right) \in K : a, d \in U,  \ c \in \mathfrak{p}, \ b \in \mathfrak{o} \end{array}\right\}$.

\end{enumerate} 
\end{defi}

\begin{rem}\label{K cpt}
The group $K$ is the unique maximal compact subgroup of $G$ up to conjugation. We refer the reader to \cite[pp $304$]{B-K}, for a detailed proof of this fact.  
\end{rem}

The following theorem states the classical decompositions of $G$.

\begin{thm}{\label{thm 1}} Denote by $s$ the permutation matrix $\left(\begin{array}{ccc} 0 & 1 \\ 1 & 0 \end{array}\right)$.
Using the notations defined in \ref{subgroups}, we have the following:

\begin{enumerate}

\item The Iwasawa decomposition: $G = BK$.

\item Let $\varpi$ be a prime element of $F$. The matrices
 
\[\left(\begin{array}{ccc} \varpi^{a} & 0 \\ 0 & \varpi^{b}\end{array}\right),\ \forall a, b \in \mathbb{Z},  \ \ \ a<b \]

form a set of representatives for the coset space $K \backslash G/K$ and $G$ has the Cartan decomposition 
\[ G = \displaystyle {\bigsqcup_{a\leq b}} K \left(\begin{array}{ccc} \varpi^{a} & 0 \\ 0 & \varpi^{b}\end{array}\right) K. \]

\item The Bruhat decomposition  $G = B \cup BsB$.

\item The Levi decomposition $B = NT$.

\item There is a decomposition for the Iwahori subgroup $I$,  $I = (I \cap N^{\prime})(I \cap T)(I \cap N)$.
More precisely, the product map \[I \cap N^{\prime}\times I \cap T \times I \cap N \longrightarrow I \] is a homeomorphism, for any ordering of the factors on the left hand side.

\end{enumerate}
 
\end{thm}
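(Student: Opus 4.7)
The plan is to verify each of the five decompositions by elementary linear algebra over the discrete valuation ring $\mathfrak{o}$, using that $\mathfrak{o}$ is a principal ideal domain.

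The Levi decomposition (4) is immediate from the unique factorization $\begin{pmatrix} a & b \\ 0 & d \end{pmatrix} = \begin{pmatrix} 1 & b/a \\ 0 & 1 \end{pmatrix}\begin{pmatrix} a & 0 \\ 0 & d \end{pmatrix}$. For Iwasawa (1), given $g = \begin{pmatrix} a & b \\ c & d \end{pmatrix} \in G$, I would clear a bottom-row entry by a column operation over $\mathfrak{o}$: if $\nu(d) \leq \nu(c)$ then $c/d \in \mathfrak{o}$ and right multiplication by $\begin{pmatrix} 1 & 0 \\ -c/d & 1 \end{pmatrix} \in K$ kills the $(2,1)$ entry; otherwise $d/c \in \mathfrak{o}$, the symmetric move clears the $(2,2)$ entry, and a final swap by $s \in K$ produces an upper-triangular matrix. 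For Cartan (2), I would invoke the Smith normal form applied to the lattice $g\mathfrak{o}^2 \subset F^2$: any two $\mathfrak{o}$-bases of a free rank-two $\mathfrak{o}$-module differ by an element of $K$, so there exist $k_1, k_2 \in K$ with $g = k_1 \mathrm{diag}(\varpi^a, \varpi^b) k_2$, and uniqueness of the unordered pair $\{a,b\}$ follows from invariance of the elementary divisors of the lattice under change of basis, with $a \leq b$ fixed as a canonical ordering.

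For Bruhat (3), I would use the transitive action of $G$ on $\mathbb{P}^1(F)$ under which $B$ is the stabilizer of $\infty = [1:0]$, giving $G/B \cong \mathbb{P}^1(F)$; the subgroup $N$ already acts transitively on $F = \mathbb{P}^1(F)\setminus\{\infty\}$ via $\begin{pmatrix} 1 & y \\ 0 & 1 \end{pmatrix}\cdot[0:1] = [y:1]$, so $B$ has exactly two orbits on $G/B$, which correspond to the double cosets $B$ and $BsB$. For the Iwahori decomposition (5), I would verify the factorization by direct calculation: expanding the product $\begin{pmatrix} 1 & 0 \\ x & 1 \end{pmatrix}\begin{pmatrix} u_1 & 0 \\ 0 & u_2 \end{pmatrix}\begin{pmatrix} 1 & y \\ 0 & 1 \end{pmatrix}$ for $x \in \mathfrak{p}$, $u_1, u_2 \in U$, $y \in \mathfrak{o}$ yields $\begin{pmatrix} u_1 & u_1 y \\ xu_1 & xu_1 y + u_2 \end{pmatrix}$, and given any $\begin{pmatrix} a & b \\ c & d \end{pmatrix} \in I$ the explicit formulas $u_1 = a$, $y = b/a$, $x = c/a$, $u_2 = d - bc/a$ recover the factors continuously; note $u_2 \in U$ because $bc/a \in \mathfrak{p}$ and $d \in U$. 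Other orderings of the three factors are handled by the same kind of direct computation.

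The main obstacle I foresee is the uniqueness in the Cartan decomposition, which is the only part relying on genuine module theory (invariance of elementary divisors over a PID) rather than direct matrix manipulation; all remaining steps reduce to short calculations with $2\times 2$ matrices.
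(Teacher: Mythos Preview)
Your proposal is correct and follows exactly the route the paper indicates: the paper's own proof is the single sentence ``By matrix manipulation'' with a reference to Bushnell--Henniart, and your sketch supplies precisely those manipulations (column operations for Iwasawa, Smith normal form for Cartan, the action on $\mathbb{P}^1(F)$ for Bruhat, and explicit factorizations for Levi and Iwahori). One trivial slip to fix: in part~(4) the unipotent factor should read $\begin{pmatrix} 1 & b/d \\ 0 & 1 \end{pmatrix}$ rather than $\begin{pmatrix} 1 & b/a \\ 0 & 1 \end{pmatrix}$, since $\begin{pmatrix} 1 & b/d \\ 0 & 1 \end{pmatrix}\begin{pmatrix} a & 0 \\ 0 & d \end{pmatrix} = \begin{pmatrix} a & b \\ 0 & d \end{pmatrix}$.
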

\begin{proof}
By matrix manipulation. See \cite[pp $50-52$]{B-H} for full details.
\end{proof}

\subsection{The Bruhat-Tits tree}

We now recall the definition of a Bruhat-Tits tree and some of its basic properties. We refer the reader to \cite{S} and \cite{Ch} for exhaustive treatments of the subject. 

\begin{defi}
A \textit{tree} $X = (X^{0},X^{1})$ (where $X^{0}$ denotes the set of vertices and $X^{1}$ the set of edges) is a non-empty connected graph without circuits/cycles.
A tree is called \textit{Bruhat-Tits} if every vertex belongs to $m+1$ distinct edges for a fixed $m \in \mathbb{N}$ and $m \geq 2$.
\end{defi}

\begin{defi} 
A \textit{path} in $X$ is a finite or infinite sequence of distinct vertices $x_{i}$ connected by a sequence of edges $\{x_{i},  x_{i+1}\}$.
\end{defi}

This leads us to the following definition of distance on $X$.

\begin{defi}
Given any vertices $x,y \in X^{0}$, there exists a unique path joining $x$ to $y$, since $X$ is connected. We define the \textit{distance} ${d(x,y)}$ to be the number of edges in this path.
For every vertex $x \in X^{0}$, we set $d(x, x) = 0$.
\end{defi}

\begin{defi}
A path in a tree is called \textit{infinite} if it is parametrised by $\mathbb{N}$ and \textit{doubly infinite} if it is parametrised by $\mathbb{Z}$. 
\end{defi}

\begin{defi} An \textit{end} of the tree $X$ is an equivalence class of infinite paths for the equivalence relation $\sim$ given by  
 \[ (x_{i})\sim(y_{j}) \Longleftrightarrow ( \exists \ k,  \ p : x_{i} = y_{i+k}, \  \forall \ i > p). \] 
\end{defi}

In other words, two infinite paths are equivalent (and therefore belong to the same end) if they differ only by a finite initial sequence of vertices. 
We can think of an infinite path as an infinite ray in the tree heading off to infinity.
Then an `end' represents a point at infinity for the tree $X$. 

\begin{note} 
We denote by $\Omega$ the set of ends of $X$. 
We use the notation $(x_{i})_{i \in \mathbb{N}}$ for any infinite path and $|(x_{i})|_{i \in \mathbb{N}}$ for an end $\omega$.
The notation $|\mbox{\textperiodcentered}|$ here signifies that $\omega$ is an equivalence class of infinite paths.
\end{note}

\begin{defi}
A doubly infinite path $[\omega^{\prime},  \omega]$ joining two ends $\omega^{\prime},\omega$ is called an \textit{apartment}.
\end{defi}
 
\begin{rem} \label{apart} The following are easy to check.
\begin{enumerate}
\item Given $x \in X^{0}, \omega \in \Omega$, there exists a unique infinite path $[x, \omega]$, starting from $x$ which belongs to the equivalence class $\omega$. 
\item Given any two distinct ends $\omega, \omega^{\prime}$, there exists an apartment $[\omega,\omega^{\prime}]$ containing them.
\end{enumerate}
\end{rem}

We now state without proof a result we will be using in the proof of the main theorem. 

\begin{prop}[{\cite[pp $19$]{Ch}}] \label{crossroad 2} 
Any three distinct ends of $X$ meet at a unique vertex called the \textit{crossroad}.  
\end{prop}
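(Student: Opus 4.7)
My plan is to produce the crossroad by projecting the third end onto the apartment joining the first two, then pin down uniqueness via the tree property that a path cannot leave and re-enter a geodesic.

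Let $\omega_1,\omega_2,\omega_3$ be three pairwise distinct ends. First I would invoke Remark \ref{apart}(2) to fix an apartment $A = [\omega_1,\omega_2]$. Picking any vertex $y \in A$, Remark \ref{apart}(1) gives a unique infinite ray $\gamma = [y,\omega_3]$ representing $\omega_3$. The key tree-theoretic observation is that once $\gamma$ leaves the geodesic $A$ it cannot return without creating a cycle, so $\gamma \cap A$ is an initial segment of $\gamma$. This segment must be finite: otherwise $\gamma \subseteq A$, forcing $\omega_3 \in \{\omega_1,\omega_2\}$, contradicting distinctness. Let $v$ be the last vertex of $\gamma$ lying on $A$. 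To see $v$ is a crossroad, I check that the three rays $[v,\omega_i]$ are pairwise disjoint apart from $v$: the rays $[v,\omega_1]$ and $[v,\omega_2]$ are the two sub-rays of $A$ emanating from $v$, so they meet only at $v$, while $[v,\omega_3]$ is the tail of $\gamma$ from $v$ onward and meets $A$ only at $v$ by construction.

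For uniqueness, suppose another vertex $v'$ has the crossroad property. Then $[v',\omega_1] \cup [v',\omega_2]$ is a doubly infinite geodesic joining $\omega_1$ and $\omega_2$, and since such a geodesic in a tree is unique it must equal $A$; in particular $v' \in A$. The ray $[v',\omega_3]$ meets $A$ only at $v'$. If $v' \neq v$, I can alternatively reach $\omega_3$ from $v'$ by first walking along $A$ from $v'$ to $v$ and then following $[v,\omega_3]$; by the uniqueness of rays in Remark \ref{apart}(1), this alternative coincides with $[v',\omega_3]$ and therefore passes through $v \in A$ with $v \neq v'$, contradicting $[v',\omega_3] \cap A = \{v'\}$. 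Hence $v' = v$.

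The main hurdle is conceptual rather than computational: one must use carefully the tree axiom that no path can leave a geodesic and return, both to make the phrase ``last vertex of $\gamma$ on $A$'' well-defined and to close the uniqueness argument. Everything else reduces to the unique-path and unique-ray properties already packaged in Remark \ref{apart}.
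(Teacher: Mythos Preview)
The paper does not actually prove this proposition: it is stated with a citation to \cite{Ch} and introduced by ``We now state without proof a result we will be using in the proof of the main theorem.'' So there is no in-paper argument to compare against.

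Your argument is correct and is essentially the standard one. The only place worth tightening is the uniqueness step: when you build the alternative ray from $v'$ to $\omega_3$ by walking along $A$ to $v$ and then following $[v,\omega_3]$, you should say explicitly why this concatenation is reduced (no backtracking at $v$), namely because $[v,\omega_3]$ leaves $A$ immediately by construction. Once that is said, Remark \ref{apart}(1) forces this path to equal $[v',\omega_3]$, and the contradiction follows as you wrote. With that one clause added, the proof is complete.
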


\subsection{The Bruhat-Tits tree associated to $\PG_{2}(F)$}

In this subsection we construct the Bruhat-Tits tree $X_{\PG_{2}(F)}$, associated to $\PG_{2}(F)$ and analyse the action of $\PG_{2}(F)$ on it.   
The reader is referred to \cite{S} and \cite{DT} for complete details.

\vspace{0.2 cm}
\noindent Denote by $V$ an $F$-vector space of dimension 2.

\begin{defi} A lattice $L$ of $V$ is a finitely generated $\mathfrak{o}$-submodule of $V$ which generates the $F$-vector space $V$. 
It is a free module of rank 2.\end{defi}

\begin{defi}\label{vert}
The group $F^\times$ acts on the set of lattices of $V$ as follows. 
For $m \in (F^{\times},\times)$ and $L$ a lattice of $V$,  $mL$ is also a lattice of $V$. 
The orbit of a lattice under this action is called its \textit{class}. 
Two lattices are equivalent iff they belong to the same class. 
We denote by $[L]$ the class of the lattice $L$ and by $X^{0}$ the set of lattice classes.
\end{defi}

\begin{defi}\label{edge}
Two lattice classes $x,  \ x^{\prime}$ are \textit{adjacent} if there is a representative lattice $L^{\prime}$ of $x^{\prime}$ and $L$ of $x$ such that 
\[ \varpi L \subsetneq L^{\prime} \subsetneq L. \]
Denote by $X^{1}$ the set consisting of all unordered pairs $\{x, x^{\prime}\}$, for $x, x^{\prime}$ such that $x$ is adjacent to $x^{\prime}$. 
\end{defi}

\begin{defi} Denote by $X _{\PG_{2}(F)} = (X^{0}, X^{1})$ the graph where $X^{0}$, defined in Definition \ref{vert} forms the set of vertices and $X^{1}$, as defined in Definition \ref{edge} forms the set of edges. 
\end {defi}

We refer the reader to \cite[pp 70]{S} for a proof of the fact that $X_{\scriptscriptstyle\PG_{2}(F)}$ is a tree.

\begin{rem}\label{type q}
Suppose $V = F^{2}$ and denote by $e_{1},e_{2}$ the canonical basis of $V$.
Let $x_{0}$ denote the lattice class of $L_{0}:= \mathfrak{o}e_{1} \bigoplus \mathfrak{o}e_{2}$. 
Denote by $x$, a vertex adjacent to $x_{0}$ and by $L$, a representative of $x$. Since
\[\varpi L_{0}\subsetneq L \subsetneq L_{0} \Leftrightarrow L/\varpi L_{0} \subsetneq L_{0}/\varpi L_{0} \simeq \mathbb{F}_{q}\oplus \mathbb{F}_{q} \simeq \mathbb{F}_{q}^{2},\]
\noindent the neighbours of $x_{0}$ correspond to the one-dimensional subspaces of $\mathbb{F}_{q}^{2}$ which we can identify with the projective line $\mathbb{P}^{1}(\mathbb{F}_{q})$. 
It follows that $x_{0}$ has $q+1$ neighbours. 
It is easy to check that these correspond to the subspaces generated by  $\left(\begin{array}{ccc} 1 \\ 0 \end{array}\right)$ and $\left(\begin{array}{ccc} a \\ 1 \end{array}\right)$ for $a \in \mathbb{F}_{q}$.

\vspace{0.2 cm}
Since any lattice can be given the basis $e_{1}, e_{2}$ (using a change of bases matrix), we obtain that every vertex in $X_{\PG_{2}(F)}$ has $q+1$ neighbours.
Hence, the tree $X_{\PG_{2}(F)}$ is a Bruhat-Tits tree. 
\end{rem}

\begin{defi}
The apartment $\{x_n\}_{n \in \mathbb{Z}}$ such that $x_n=[L_n]$  where $L_n:= \mathfrak{o}e_1 + \mathfrak{o}\varpi^ne_2$ is called the \textit{standard apartment} of the tree $X_{\scriptscriptstyle\PG_{2}(F)}$.
\end{defi}

\subsection{Action of the group $\PG_{2}(F)$ on the tree $X_{\scriptscriptstyle\PG_{2}(F)}$}

The group $GL(V)$ of general linear transformations of $V$ has a natural action on $X_{\scriptscriptstyle\PG_{2}(F)}$ since it acts on the lattices of $V$. 
Fixing a basis $(e_{1}, e_{2})$ of $V$, we can identify this group with $\G_{2}(F)$.
Then we have a right action of $\G_{2}(F)$ on $X_{\scriptscriptstyle\PG_{2}(F)}$ given by
\[ \left(\begin{array}{ccc} a & b \\ c & d \end{array}\right)e_{1} =  ae_{1} + ce_{2},\  \left(\begin{array}{ccc} a & b \\ c & d \end{array}\right)e_{2} = be_{1} + de_{2}.\]

\noindent The centre $Z$ of $\G_{2}(F)$ (Definition \ref{subgroups}) is isomorphic to $F^{*}$ by the map $a \mapsto \left(\begin{array}{ccc} a & 0 \\ 0 & a \end{array}\right)$. 
This implies that $Z$ acts trivially on the vertices and therefore the action of the group $\G_{2}(F)$ descends to an action of the group $\PG_{2}(F)$ on $X_{\PG_{2}(F)}$. 

\vspace{0.2 cm}
Now we state a well known theorem describing the action of $\PG_{2}(F)$ on $X_{\scriptscriptstyle\PG_{2}(F)}$.

\begin{defi}\label{quot subgps}
Consider the image of the subgroups defined in Definition \ref{subgroups} under the quotient map $\G_{2}(F) \longrightarrow \G_{2}(F)/Z$.
We denote these to be ${\overline{K}},{\overline{B}}, {\overline{I}}$ corresponding to $K, B, I$.
\end{defi}

\begin{thm} \label{thm 2}
Denote by  $x_{i}$ the class of the lattice $L_{i}$ generated by $(e_{1},  \varpi^{i}e_{2})$, where $e_{1}$ and $e_{2}$ are the canonical basis of $V$. 
Let ${\overline{K}},{\overline{B}}, {\overline{I}}$ be as in Definition \ref{quot subgps}.
Then we have the following:

\begin{enumerate}
\item The group $\PG_{2}(F)$ acts transitively on the vertices and the edges of the tree $X_{\scriptscriptstyle\PG_{2}(F)}$.
\item The subgroup   ${\overline{K}}$ acts transitively on the spheres centred at $x_{0}$.
\item The stabiliser of the vertex $x_{0}$ is the subgroup  ${\overline{K}}$.
\item The stabiliser of the edge $\{x_{0}, x_{1}\}$ is the Iwahori subgroup $\overline{I}$.    
\item The stabiliser of the end $\omega = (x_{0},  x_{1},  x_{2},  \dots )$ is the Borel subgroup $\overline{B}$. 
\end{enumerate}
\end{thm}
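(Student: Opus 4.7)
The strategy is to prove (3) first by a direct calculation, then derive (1), (2), and (4) from (3) using the decompositions in Theorem~\ref{thm 1}, and finally prove (5) via Iwasawa together with the uniqueness of the geodesic ray from a vertex to an end.

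For (3), $g \in \G_2(F)$ fixes $x_0 = [L_0]$ iff $gL_0 = \lambda L_0$ for some $\lambda \in F^{\times}$, iff $\lambda^{-1}g \in \G_2(\mathfrak{o}) = K$, iff $g \in ZK$; modding by $Z$ gives $\overline{K}$. For vertex transitivity in (1), any lattice $L$ has an $\mathfrak{o}$-basis $(f_1,f_2)$, and the matrix sending $(e_1,e_2)$ to $(f_1,f_2)$ carries $L_0$ onto $L$. For edge transitivity, given $\varpi L \subsetneq L' \subsetneq L$ the quotient $L/L'$ has length one, so the elementary divisor theorem yields a basis $(f_1,f_2)$ of $L$ with $L' = \mathfrak{o}f_1 + \mathfrak{p}f_2$; the analogous change-of-basis matrix then sends $\{x_0,x_1\}$ to $\{[L],[L']\}$.

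For (2), given $x$ with $d(x_0,x)=n$, write $x = gx_0$ and decompose $g = k_1 \operatorname{diag}(\varpi^a,\varpi^b) k_2$ via Theorem~\ref{thm 1}(2) with $a \leq b$. Since $k_2 \in K$ fixes $x_0$ and $\operatorname{diag}(\varpi^a,\varpi^b) x_0 = x_{b-a}$, we get $x = k_1 x_{b-a}$, and as $k_1$ is an isometry, $n = b-a$; hence $x \in \overline{K} \cdot x_n$. For (4), the pointwise stabiliser of $\{x_0,x_1\}$ equals $\overline{K} \cap g_1 \overline{K} g_1^{-1}$ for $g_1 = \operatorname{diag}(1,\varpi)$, and the conjugation $g_1^{-1} \begin{pmatrix} a & b \\ c & d \end{pmatrix} g_1 = \begin{pmatrix} a & \varpi b \\ \varpi^{-1} c & d \end{pmatrix}$ shows this intersection is exactly the image of $\{k \in K : c \in \mathfrak{p}\}$, i.e., $\overline{I}$.

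For (5), the inclusion $\overline{B} \subseteq \operatorname{stab}(\omega)$ is direct: for $g = \begin{pmatrix} a & b \\ 0 & d \end{pmatrix}$ one checks $gL_n \sim L_{n+v(d)-v(a)}$ once $n$ is large, so the tails of $(gx_n)$ and $(x_n)$ agree. Conversely, if $g\omega = \omega$, write $g = b'k$ via Iwasawa (Theorem~\ref{thm 1}(1)) with $b' \in B$ and $k \in K$; then $k$ fixes both $x_0$ and $\omega$. By Remark~\ref{apart}(1) the geodesic ray from $x_0$ to $\omega$ is unique and equal to $(x_0,x_1,x_2,\ldots)$, so $k$ must fix it pointwise. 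Imposing $k x_n = x_n$ for every $n \geq 0$ forces the $(2,1)$-entry of $k$ to lie in $\bigcap_n \mathfrak{p}^n = 0$, whence $k \in K \cap B \subseteq B$ and $g \in B$. The main obstacle lies in precisely this last direction: one must use the uniqueness in Remark~\ref{apart}(1) to upgrade end-stability to pointwise stability of the entire ray, after which $\bigcap_n \mathfrak{p}^n = 0$ closes the argument; the remaining parts reduce to routine matrix manipulations given the decompositions in Theorem~\ref{thm 1}.
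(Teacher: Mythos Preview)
Your proof is correct. The paper's own proof consists only of the phrase ``matrix manipulation and Remark~\ref{type q}'' together with a reference, so your argument is a fully worked-out version of what the paper merely gestures at. The one organisational difference worth noting is that you import the Cartan and Iwasawa decompositions from Theorem~\ref{thm 1} to handle parts (2) and (5): Cartan shows at once that every $\overline{K}$-orbit on vertices meets the half-apartment $\{x_n : n \ge 0\}$, and Iwasawa reduces the computation of $\mathrm{stab}(\omega)$ to analysing $K \cap \mathrm{stab}(\omega)$, which you then force into $B$ via $\bigcap_{n} \mathfrak{p}^{n} = 0$. A purely lattice-theoretic treatment (as in Serre or the cited thesis) would instead compute $\mathrm{stab}(x_n)$ directly for each $n$ and take intersections; your route is shorter once Theorem~\ref{thm 1} is available, and there is no circularity since that theorem is proved by bare matrix manipulation in \cite{B-H}. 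One small remark on (4): you compute the \emph{pointwise} stabiliser of $\{x_0,x_1\}$, which is indeed $\overline{I}$; the setwise stabiliser is strictly larger (it contains the class of $\left(\begin{smallmatrix} 0 & 1 \\ \varpi & 0 \end{smallmatrix}\right)$), so your reading of the statement is the intended one, consistent with the paper's later Definition~\ref{main defi}.
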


\begin{proof}
Using matrix manipulation and Remark \ref{type q}. See \cite[pp 19]{K}.   
\end{proof}

\section{The automorphism group of a Bruhat-Tits tree}

In this section we discuss the automorphism group of a Bruhat-Tits tree and the property of weak two transitivity. In particular, we show that the subgroup $\PG_{2}(F)$ is a weakly two-transitive subgroup of the automorphism group of $X_{\scriptscriptstyle\PG_{2}(F)}$. 

\subsection {The group $\A(X)$}

Let $X = (X^{0},X^{1})$ be \textit{any} Bruhat-Tits tree. We recall some basic facts about the group of isometries of the Bruhat-Tits tree $X$.
We refer the reader to \cite[section $1.3$]{Ch} for a detailed introduction to the topology of this group and its action on the tree $X$.
  
\begin{defi} An \textit{automorphism} $\phi$ of the tree $X = (X^{0}, X^{1})$ is a bijection on the set of vertices $X^{0}$ which preserves the distance between them i.e., 
\[d(x, y) = d(\phi(x), \phi(y)), \ \forall \ x, y \in X^{0}. \]
\noindent The set of automorphisms of the tree $X$ will be denoted by $\A(X)$.
\end{defi}

\textit{The topology of $\A(X)$}:
Since an automorphism of the tree preserves the distance between any two vertices $x, y \in X^{0}$,  it is also a bijection on the set of edges $X^{1}$. 
The set $\A(X)$ of automorphisms of the tree is a group under the operation of composition. 
When viewed as the set of functions from $X^{0}$ to itself, it is a topological group equipped with the topology of pointwise convergence.   

\vspace{0.2 cm}
The following proposition tells us when is a subgroup of $\A(X)$ closed.

\begin{prop}[{\cite[pp $32$]{Ch}}]  \label{closed in A} For $x \in X^{0}$, we denote by $\mathcal{G}_{x}$ the stabiliser of the vertex $x$. 
A subgroup $H$ of $\A(X)$ is closed if and only if $H\cap \mathcal{G}_{x}$ is compact for a vertex $x$ of the tree $X$.  
\end{prop}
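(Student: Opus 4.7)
The plan is to exploit the two facts that $X$ is locally finite (since Bruhat--Tits, every vertex has $m+1$ neighbours, so each sphere $S_n(x):=\{y\in X^0:d(x,y)=n\}$ is finite) and that $X^0$ is discrete, so pointwise convergence on $X^0$ is eventually constant. These are the two ingredients both directions really rely on.

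For the forward direction, the first step is to show that the full stabiliser $\mathcal{G}_x$ is itself compact. Each $\phi\in\mathcal{G}_x$ permutes every finite sphere $S_n(x)$, producing a continuous injective homomorphism $\mathcal{G}_x\hookrightarrow\prod_{n\geq 0}\mathrm{Sym}(S_n(x))$ which is a topological embedding (the pointwise-convergence topology on $\A(X)$ restricts to the product topology here) with closed image. Hence $\mathcal{G}_x$ is profinite, in particular compact, and so $H\cap\mathcal{G}_x$ is a closed subgroup of a compact group whenever $H$ is closed, hence compact.

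For the converse, suppose $H\cap\mathcal{G}_x$ is compact, and let $g\in\overline{H}$ with a net $(h_\alpha)\subseteq H$ converging pointwise to $g$. Since $X^0$ is discrete and $h_\alpha(x)\to g(x)$, we have $h_\alpha(x)=g(x)$ for all $\alpha$ sufficiently large; fix one such index and call the corresponding element $h\in H$. Then $h^{-1}g\in\mathcal{G}_x$, and the translated net $h^{-1}h_\alpha\to h^{-1}g$ still lies in $H$ and, by the same discreteness argument applied at $x$, eventually lies in $\mathcal{G}_x$. Thus $h^{-1}g$ lies in the closure of $H\cap\mathcal{G}_x$; but compactness of $H\cap\mathcal{G}_x$ together with Hausdorffness of $\A(X)$ forces $H\cap\mathcal{G}_x$ to be closed, so $h^{-1}g\in H\cap\mathcal{G}_x\subseteq H$, and finally $g=h\cdot(h^{-1}g)\in H$.

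The main obstacle, as I see it, is not deep but bookkeeping: one must chain together three facts carefully, namely that pointwise convergence of $h_\alpha$ to $g$ forces $h_\alpha(x)=g(x)$ eventually (discreteness of $X^0$), that this lets us reduce from an arbitrary $g\in\overline{H}$ to the element $h^{-1}g\in\overline{H}\cap\mathcal{G}_x$ via multiplication, and that compactness promotes $H\cap\mathcal{G}_x$ to being closed so that the inclusion $\overline{H}\cap\mathcal{G}_x\subseteq H$ actually goes through. Apart from these topological details, the proof uses nothing beyond local finiteness of $X$ and basic facts about topological groups, so the same argument applies to \emph{any} Bruhat--Tits tree and not only to $X_{\scriptscriptstyle\PG_2(F)}$.
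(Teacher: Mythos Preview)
Your argument is correct. The paper does not give its own proof of this proposition; it is quoted from \cite{Ch} without argument, so there is no in-paper proof to compare against. Your proof supplies precisely the standard details: compactness of the full stabiliser $\mathcal{G}_x$ via the closed embedding into the profinite product $\prod_{n\ge 0}\mathrm{Sym}(S_n(x))$ (using local finiteness of $X$), and then for the converse the reduction of an arbitrary limit point $g\in\overline{H}$ to a limit point of $H\cap\mathcal{G}_x$ by left-translating with a suitably chosen $h\in H$, which works because discreteness of $X^0$ forces $h_\alpha(x)$ to be eventually constant. One minor remark: since $X^0$ is countable and discrete, $\A(X)$ is metrisable, so sequences would suffice in place of nets, but the net argument is of course fine.
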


\vspace{0.2 cm}
We state now a basic definition which will be used in the proof of the main theorem.

\begin{defi}
An element $g$ of $\A(X)$ is said to \textit{invert} an edge $\alpha$ with extremities $x$ and $y$ if $g(x)= y$ and $g(y) = x$.     
\end{defi}

\begin{defi}
Let $g$ be an element of $\A(X)$ which does not invert any edges and $l_{g} := \mbox{min}\{d(x, g(x)): x \in X^{0}\}$. If $l_{g} = 0$ then $g(x) = x$ for some $x \in X^{0}$ and we call $g$ an \textit{elliptic} automorphism.
If $l_{g} > 0$,  we call $g$ a \textit{hyperbolic} automorphism.  
\end{defi}

\begin{rem}\label{auto types} 
Every automorphism of the tree which is not an inversion is either elliptic or hyperbolic. See \cite[pp 63,64]{S} for a proof. 
\end{rem}

\subsection{Weak two transitivity}

We discuss now the property of weak two transitivity and show in Proposition \ref{weak 2 transitivity} that a closed transitive subroup of $\A(X)$ will have this property. 
This result allows us to 'roughly' translate the transitive action of a group on the set of vertices to that on the set of ends.     

\begin{defi}\label{def wk k} A subgroup $\mathcal{G} \subset \A(X)$ is called \textit{weakly $k$-transitive} if for any two finite isomorphic subsets $A, B \subset X^{0}$ of cardinality $k$, parametrised by $x_{i} \in A$ and $y_{i} \in B$, there exists $g \in \mathcal{G}$ such that $g(x_{i}) = y_{i}$, for all $1 \leq i \leq k$.
\end{defi}

\vspace{0.2 cm}
The following proposition is similar to Proposition $1.4.1$ of \cite{Ch} and plays a crucial role in the proof of Theorem \ref{main thm}. Our assumptions are different: our tree is homogeneous and we have no numbering on the set of vertices. 
Most importantly, we have no condition on the distance between vertices which is used in Choucroun's proof. 
Instead, we use the fact that in our case, the action of the subgroup is transitive on the set of vertices. 
  
\begin{prop}\label{weak 2 transitivity} The following conditions are equivalent for any closed subgroup $\mathcal{G}$ of $\A(X)$ with transitive action on the set of vertices, $X^{0}$:
 
\begin{enumerate}
 
\item Given any two pairs of vertices $(x_{1}, x_{2}), (y_{1}, y_{2}$) such that $d(x_{1}, x_{2}) = d(y_{1}, y_{2})$, there exists an element $g \in \mathcal{G}$ such that $g(x_{i}) = y_{i}$ for $i = 1, 2$. 

\item The subgroup $\mathcal{G}$ is transitive on each sphere i.e., given any three vertices $x, \ y, \ z$ such that $d(x, y) = d(x, z)$, there exists $g \in \mathcal{G}$ such that $g(x)= x$ and $g(y) = z$.   

\item Let $\omega_{i}$ and $\sigma_{i}$ be ends and $[\omega_{1}, \omega_{2}]$,  $[\sigma_{1}, \sigma_{2}]$ apartments containing these ends. 
Consider $x_{0}$ and $y_{0}$ in the apartments $[\omega_{1}, \omega_{2}]$ and $[\sigma_{1}, \sigma_{2}]$, respectively. 
Then there exists $g \in \mathcal{G}$  such that $g(x_{0}) = y_{0}$ and $g(\omega_{i}) = \sigma_{i}$ for $i= 1, 2$.

\end{enumerate}

\end{prop}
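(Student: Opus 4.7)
The plan is to prove the three implications $(1) \Leftrightarrow (2)$, $(3) \Rightarrow (1)$, and $(1) \Rightarrow (3)$; the first two are quick and rely only on the hypothesis that $\mathcal{G}$ acts transitively on $X^{0}$, whereas the last is the substantial step and uses the compactness of vertex stabilisers from Proposition~\ref{closed in A}.

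For $(1) \Rightarrow (2)$ one simply applies $(1)$ to the pairs $(x,y)$ and $(x,z)$. For $(2) \Rightarrow (1)$, given equidistant pairs $(x_{1},x_{2})$ and $(y_{1},y_{2})$, I would first use transitivity of $\mathcal{G}$ on $X^{0}$ to find $h \in \mathcal{G}$ with $h(x_{1}) = y_{1}$; since $d(y_{1}, h(x_{2})) = d(y_{1}, y_{2})$, applying $(2)$ at the vertex $y_{1}$ yields $g \in \mathcal{G}$ with $g(y_{1}) = y_{1}$ and $g(h(x_{2})) = y_{2}$, so $gh$ is the desired element. For $(3) \Rightarrow (1)$ the trick is to extend the finite geodesic joining $x_{1}$ to $x_{2}$ to an apartment $[\omega_{1},\omega_{2}]$ with $x_{1}$ on the $\omega_{1}$-side and $x_{2}$ on the $\omega_{2}$-side, and analogously to construct $[\sigma_{1},\sigma_{2}]$ from $(y_{1},y_{2})$. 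Applying $(3)$ with $x_{0} = x_{1}$ and $y_{0} = y_{1}$ yields an element that sends $\omega_{i}$ to $\sigma_{i}$ and $x_{1}$ to $y_{1}$; isometric rigidity then forces $x_{2} \mapsto y_{2}$, since each is the unique vertex at distance $n := d(x_{1},x_{2})$ from the chosen base vertex toward the chosen end.

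The main obstacle is $(1) \Rightarrow (3)$. After using transitivity to reduce to the case $x_{0} = y_{0}$, I would parametrise the two apartments as $\{a_{k}\}_{k \in \mathbb{Z}}$ and $\{b_{k}\}_{k \in \mathbb{Z}}$ with $a_{0} = b_{0} = x_{0}$, with $a_{k} \to \omega_{1}$ and $b_{k} \to \sigma_{1}$ as $k \to +\infty$, and the opposite ends as $k \to -\infty$. For each $n \geq 1$, I would apply $(1)$ to the equidistant pairs $(a_{n}, a_{-n})$ and $(b_{n}, b_{-n})$ (both at distance $2n$) to obtain $g_{n} \in \mathcal{G}$ sending $a_{\pm n}$ to $b_{\pm n}$. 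Since $a_{k}$ is the unique vertex on the geodesic from $a_{n}$ to $a_{-n}$ at distance $n-k$ from $a_{n}$, the isometry $g_{n}$ must send $a_{k}$ to $b_{k}$ for every $-n \leq k \leq n$; in particular $g_{n}(x_{0}) = x_{0}$. Because $\mathcal{G}$ is closed in $\A(X)$, Proposition~\ref{closed in A} tells us that $\mathcal{G} \cap \mathcal{G}_{x_{0}}$ is compact, so $(g_{n})$ admits a pointwise-convergent subsequence whose limit $g$ lies in $\mathcal{G} \cap \mathcal{G}_{x_{0}}$ and satisfies $g(a_{k}) = b_{k}$ for every $k \in \mathbb{Z}$; hence $g$ sends the apartment $[\omega_{1}, \omega_{2}]$ onto $[\sigma_{1}, \sigma_{2}]$ with $g(\omega_{i}) = \sigma_{i}$ for $i = 1,2$. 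The delicate point is that ends are not vertices and cannot be pinned down by $(1)$ directly; only the compactness of the stabiliser lets one upgrade the approximating vertex data to an honest automorphism realising the required behaviour on ends.
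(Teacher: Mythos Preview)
Your argument for $(1)\Leftrightarrow(2)$ is word-for-word the paper's: $(1)\Rightarrow(2)$ is trivial, and for $(2)\Rightarrow(1)$ both you and the paper first transport $x_{1}$ to $y_{1}$ by vertex-transitivity and then rotate the image of $x_{2}$ onto $y_{2}$ inside the sphere about $y_{1}$.

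For the remaining implications the paper does not give an argument at all: it simply refers the reader to Choucroun \cite{Ch}, stating that the proof of $(1)\Rightarrow(3)$ and $(3)\Rightarrow(2)$ is ``the same as in \cite{Ch}''. Your proposal therefore goes further than the paper by spelling these steps out. Your $(3)\Rightarrow(1)$ (rather than $(3)\Rightarrow(2)$) via extending a finite geodesic to an apartment is correct and, once $(1)\Leftrightarrow(2)$ is in hand, logically equivalent. Your compactness/limit argument for $(1)\Rightarrow(3)$ is also correct and is exactly the place where the closedness hypothesis on $\mathcal{G}$ is actually used, via Proposition~\ref{closed in A}; this is the point the paper hides inside the citation. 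So your proof is complete and compatible with the paper's, and in fact supplies the content that the paper outsources to \cite{Ch}.
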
 

\begin{proof}

Clearly $(1)$ implies $(2)$. 
To prove the converse we consider two pairs of vertices $(x_{1}, x_{2})$  and $(y_{1}, y_{2})$. 
Since the subgroup $\mathcal{G}$ has transitive action on $X^{0}$, we know there exists an element $g \in\mathcal{G}$ such that $g(x_{1}) = y_{1}$.  
Let $g(x_{2}) = y_{2}^{\prime}$, for some vertex $y_{2}^{\prime}$. Then, $d(y_{1}, y_{2}^{\prime}) = d(x_{1}, x_{2}) = d(y_{1}, y_{2})$, since $g$ is an isometry. 
Therefore, $y_{2}$ and $y_{2}^{\prime}$ are in the same sphere centered at $y_{1}$. 
Now using condition $(2)$,  we know there exists $g^{\prime} \in \mathcal{G}$ such that $g^{\prime}(y_{1}) = y_{1}$ and $g^{\prime}(y^{\prime}_{2}) = y_{2}$. 
Hence, the element $g^{\prime}g \in \mathcal{G}$ and $g^{\prime}g(x_{i}) = y_{i}$ for $i= 1, 2$.

\vspace{0.2 cm}
The argument for $(1)$ implies $(3)$ and $(3)$ implies $(2)$ is the same as in \cite{Ch}. 
\end{proof} 

\begin{rem}  
Since Proposition \ref{weak 2 transitivity} ($1$) corresponds to the definition of weakly two-transitive, it implies that a closed subgroup of $\A(X)$ satisfying any of the equivalent conditions above is weakly two-transitive. 
\end{rem}

\section{The group $\mbox{\^{G}}^{(e)}$ and its decompositions}
In this final section we define the group $\mbox{\^{G}}^{(e)}$ and prove that it is a closed subgroup of $\A(X_{\scriptscriptstyle\PG_{2}(F)})$. 
We also show that it is weakly two-transitive (see Definition \ref{def wk k}). As detailed in the introduction, this group is of representation theoretic interest.

\subsection{The group $\mbox{\^{G}}^{(e)}$}
 
\begin{defi} Let $X_{\scriptscriptstyle\PG_{2}(F)} = \{X^{0}, X^{1}\}$ be the Bruhat-Tits tree associated to $\PG_{2}(F)$ and let $e$ be a positive integer.
For a vertex $x \in X^{0}$, we define \[B(x,e) := \{ y \in X^{0}: d(x, y) \leq e \}.\] 
Similarly for an edge $\eta \in X^{1}$, we define  \[ B(\eta,e) := \{ y \in X^{0}: d(\eta, y) \leq e \}\]

\noindent where $d(\eta, y) = \mbox{min}\{d(y, x_{1}),  d(y, x_{2})\}$ for $\eta = \{x_{1}, x_{2}\}$.
\end{defi}

\begin{defi}\label{G hat}
Let $e \geq 1$,  $\eta \in X^{1}$ and $B(\eta,e)$ as defined above. 
We denote by $\mbox{\^{G}}^{(e)}$ the set of automorphisms $g$ of $X_{\PG_{2}(F)}$ with the property that for all $\eta$ in $X^{1}$,  there is a $g^{\prime} \in \PG_{2}(F)$ such that 
the restrictions of $g$ and $g^{\prime}$ to $B(\eta,e)$ coincide,  i.e., {\small{
\[\mbox{\^{G}}^{(e)} := \{g \in \A(X_{\scriptscriptstyle\PG_{2}(F)}): \forall  \ \eta \in X^{1}, \  \exists  \ g^{\prime} \in \PG_{2}(F) \  \mbox{such that} \  g\vert_ {B(\eta,e)} = g^{\prime}\vert_{B(\eta,e)} \}.\]}}
\end{defi}

\vspace{0.2 cm}
The group $\mbox{\^{G}}^{(e)}$ is by definition a subgroup of $\A(X_{\scriptscriptstyle\PG_{2}(F)})$ containing $\PG_{2}(F)$. 
We have the following chain of group inclusions 
\[ \PG_{2}(F) \subset {\dots} \subset \mbox{\^{G}}^{(e+1)} \subset \mbox{\^{G}}^{(e)}\subset {\dots} \subset \mbox{\^{G}}^{(1)} \subset \A(X_{\scriptscriptstyle\PG_{2}(F)}).\]

\begin{rem}
It is easy to see that
\begin{enumerate}
\item If we let $e = 0$ in Definition \ref{G hat}, we obtain the group $\A(X_{\scriptscriptstyle\PG_{2}(F)})$.
\item  The intersection of $\mbox{\^{G}}^{(e)}$ for all $e \in \mathbb{N}$ is the subgroup $\PG_{2}(F)$.
\end{enumerate}

\end{rem}

\vspace{0.2 cm}
The following propositon is a well known fact.

\begin{prop}\label{closure}
Let $X_{\scriptscriptstyle\PG_{2}(F)}$ be the Bruhat-Tits tree associated to $\PG_{2}(F)$. Then the group $\PG_{2}(F)$ is a closed subgroup of $\A(X_{\scriptscriptstyle\PG_{2}(F)})$.
\end{prop}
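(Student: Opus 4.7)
The plan is to apply the criterion of Proposition~\ref{closed in A}. It suffices to exhibit one vertex $x$ of $X_{\scriptscriptstyle\PG_{2}(F)}$ for which the intersection $\PG_{2}(F)\cap \mathcal{G}_{x}$ is compact. The natural candidate is $x=x_{0}=[L_{0}]$, where $L_{0}=\mathfrak{o}e_{1}\oplus\mathfrak{o}e_{2}$. By Theorem~\ref{thm 2}(3), the stabiliser of $x_{0}$ inside $\PG_{2}(F)$ is precisely $\overline{K}$, so the task reduces to showing that $\overline{K}$ is compact as a subset of $\A(X_{\scriptscriptstyle\PG_{2}(F)})$ in the topology of pointwise convergence.

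Next, I would observe that $K=\G_{2}(\mathfrak{o})$ is compact in its native local-field topology: it is closed in $M_{2}(\mathfrak{o})\cong\mathfrak{o}^{4}$ (being the preimage of the closed set $U\subset\mathfrak{o}$ under the continuous determinant map), and $\mathfrak{o}$ is itself compact. The image $\overline{K}$ under the continuous projection $\G_{2}(F)\to\PG_{2}(F)$ is therefore compact in the local-field quotient topology on $\PG_{2}(F)$. Provided the natural action map $\alpha\colon\PG_{2}(F)\to\A(X_{\scriptscriptstyle\PG_{2}(F)})$ is continuous, $\alpha(\overline{K})=\overline{K}$ will remain compact in the $\A(X)$-topology, and Proposition~\ref{closed in A} will finish the proof.

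The substantive point — and the main obstacle — is the verification of continuity of $\alpha$. Because $X^{0}$ is discrete, continuity with respect to pointwise convergence amounts to requiring that for every vertex $y\in X^{0}$, the stabiliser $\mathrm{Stab}_{\PG_{2}(F)}(y)$ is open in $\PG_{2}(F)$. By the transitivity statement Theorem~\ref{thm 2}(1), every such stabiliser is a $\PG_{2}(F)$-conjugate of $\mathrm{Stab}_{\PG_{2}(F)}(x_{0})=\overline{K}$, so it suffices to show $\overline{K}$ is open. This follows because $K$ is open in $\G_{2}(F)$ — the congruence subgroup $1+\varpi M_{2}(\mathfrak{o})$ is a neighbourhood of the identity contained in $K$ — and openness is preserved under the quotient by $Z$. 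With this in place, the argument closes cleanly: $\overline{K}$ is compact, hence $\PG_{2}(F)\cap\mathcal{G}_{x_{0}}=\overline{K}$ is compact in $\A(X_{\scriptscriptstyle\PG_{2}(F)})$, and Proposition~\ref{closed in A} yields that $\PG_{2}(F)$ is closed.
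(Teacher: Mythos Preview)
Your proof is correct and follows essentially the same approach as the paper: apply Proposition~\ref{closed in A} at the vertex $x_{0}$, identify the stabiliser with $\overline{K}=\PG_{2}(\mathfrak{o})$ via Theorem~\ref{thm 2}(3), and use compactness of $\overline{K}$. The paper simply asserts that the pointwise-convergence topology and the analytic topology on $\PG_{2}(F)$ coincide, whereas you supply the concrete verification (openness of $\overline{K}$, hence continuity of the action map) that makes the transfer of compactness explicit.
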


\begin{proof}
Denote by $x_{0} = [L_{0}]$, where $L_{0} = e_{1}\mathfrak{o} \bigoplus e_{2}\mathfrak{o}$.
By Theorem \ref{thm 2}($3$) the stabiliser of $x_{0}$ in $\PG_{2}(F)$ is $\PG_{2}(\mathfrak{o})$. 
As noted in Remark \ref{K cpt}, this is a compact subgroup of $\PG_{2}(F)$. 
Since the topology of pointwise convergence coincides with the analytic topology on $\PG_{2}(F)$, $\PG_{2}({\mathfrak{o}})$ is a compact subgroup of $\A(X_{\scriptscriptstyle\PG_{2}(F)})$.  
Hence by Proposition \ref{closed in A}, $\PG_{2}(F)$ is a closed subgroup of $\A(X_{\scriptscriptstyle\PG_{2}(F)})$.  
\end{proof}

\vspace{0.2 cm}
Using this we prove the following Propositions. 

\begin{prop}\label{G hat closed}
For any $e\geq 1$, the group $\mbox{\^{G}}^{(e)}$ is a closed subgroup of $\A(X_{\scriptscriptstyle\PG_{2}(F)})$. 
\end{prop}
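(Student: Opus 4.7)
The plan is to verify closure directly from the definition of the topology of pointwise convergence on $\A(X_{\scriptscriptstyle\PG_{2}(F)})$, exploiting the fact that the target set $X^{0}$ is discrete. Concretely, I would take a net $(g_{\alpha})$ in $\mbox{\^{G}}^{(e)}$ converging to some $g \in \A(X_{\scriptscriptstyle\PG_{2}(F)})$ and show that $g \in \mbox{\^{G}}^{(e)}$.

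The key preliminary observation is that pointwise convergence to a map valued in the discrete set $X^{0}$ forces \emph{eventual equality} on any finite subset: for every finite $S \subset X^{0}$ there exists $\alpha_{0}$ with $g_{\alpha}|_{S} = g|_{S}$ for all $\alpha \geq \alpha_{0}$. Combining this with Remark \ref{type q}, which tells us that $X_{\scriptscriptstyle\PG_{2}(F)}$ is $(q+1)$-regular and therefore locally finite, every ball $B(\eta, e)$ is a finite subset of $X^{0}$.

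Now I would fix an arbitrary edge $\eta \in X^{1}$ and apply the previous observation to $S = B(\eta, e)$, obtaining an index $\alpha_{0}$ with $g_{\alpha_{0}}|_{B(\eta,e)} = g|_{B(\eta,e)}$. Since $g_{\alpha_{0}} \in \mbox{\^{G}}^{(e)}$, Definition \ref{G hat} produces an element $g^{\prime} \in \PG_{2}(F)$ such that $g_{\alpha_{0}}|_{B(\eta,e)} = g^{\prime}|_{B(\eta,e)}$, and consequently $g|_{B(\eta,e)} = g^{\prime}|_{B(\eta,e)}$. Since $\eta$ was arbitrary, $g \in \mbox{\^{G}}^{(e)}$, completing the argument.

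There is essentially no hard step here; the whole proof reduces to the observation that pointwise convergence of discrete-valued functions is equivalent to eventual equality on finite sets, together with the local finiteness of the tree. A more structural alternative, closer in style to Proposition \ref{closure}, would be to invoke Proposition \ref{closed in A}: one shows $\mbox{\^{G}}^{(e)} \cap \mathcal{G}_{x_{0}}$ is compact by checking it is a closed subgroup of the profinite (hence compact) stabiliser $\mathcal{G}_{x_{0}}$, with closedness verified by the same finite-support argument. Either route works, and I would favour the direct route since it avoids invoking the topology of $\mathcal{G}_{x_{0}}$ at all.
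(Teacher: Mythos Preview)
Your argument is correct and follows the same direct route as the paper: take a convergent sequence (net) in $\mbox{\^{G}}^{(e)}$ and verify the defining condition of Definition~\ref{G hat} on each ball $B(\eta,e)$ for the limit. Your version is in fact slightly sharper, since by making explicit that $B(\eta,e)$ is finite and $X^{0}$ discrete you obtain eventual \emph{equality} on $B(\eta,e)$ and can reuse the witness $g'\in\PG_{2}(F)$ for a single $g_{\alpha_{0}}$, whereas the paper phrases the same step as a limit and appeals (unnecessarily) to the closedness of $\PG_{2}(F)$ from Proposition~\ref{closure}.
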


\begin{proof}

Let $(g_{n})_{n \in \mathbb{N}}$ be a sequence of functions in $\mbox{\^{G}}^{(e)}$ converging to $g$ under the topology of pointwise convergence i.e., 
$\displaystyle\lim_{n\rightarrow\infty}g_n(y) = g(y)$, for all $y$ in $X^{0}$.
To prove that $\mbox{\^{G}}^{(e)}$ is a closed subgroup, we need to show that $g \in \mbox{\^{G}}^{(e)}$. But, $g|_{B(x,e)}=\displaystyle\lim_{n\rightarrow\infty}g_n|_{B(x,e)}$. 
Since $\PG_2(F)$ is closed by Proposition \ref{closure}, we have $g|_{B(x,e)} \in \PG_2(F)$ (as $g_n \in \hat{G}^{(e)}$). Hence, $g \in \hat{G}^{(e)}$.


\end{proof}

\begin{prop}\label{PG weak 2 trans}
The group $\PG_{2}(F)$ is a weakly two-transitive subgroup of $\A(X_{\scriptscriptstyle\PG_{2}(F)})$.
\end{prop}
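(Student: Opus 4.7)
The plan is to invoke Proposition \ref{weak 2 transitivity} and reduce the weak two-transitivity of $\PG_2(F)$ to the sphere-transitivity statement already available in Theorem \ref{thm 2}. The hypotheses of Proposition \ref{weak 2 transitivity} are in hand: Proposition \ref{closure} says $\PG_2(F)$ is a closed subgroup of $\A(X_{\scriptscriptstyle\PG_2(F)})$, and Theorem \ref{thm 2}(1) asserts that $\PG_2(F)$ acts transitively on $X^0$. So it suffices to verify any one of the three equivalent conditions in Proposition \ref{weak 2 transitivity}, and condition (2) is the most direct to check.

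To verify condition (2), I would start with three vertices $x, y, z \in X^0$ satisfying $d(x,y) = d(x,z)$ and reduce to the base vertex $x_0 = [L_0]$ using vertex-transitivity. Concretely, by Theorem \ref{thm 2}(1) there exists $h \in \PG_2(F)$ with $h(x_0) = x$; set $y' = h^{-1}(y)$ and $z' = h^{-1}(z)$. Since $h$ is an isometry, $d(x_0, y') = d(x_0, z')$, so $y'$ and $z'$ lie on the same sphere centred at $x_0$. By Theorem \ref{thm 2}(2), the subgroup $\overline{K}$ acts transitively on spheres centred at $x_0$, so there is $k \in \overline{K}$ with $k(y') = z'$; by Theorem \ref{thm 2}(3), $k$ also fixes $x_0$. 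Then $g := h k h^{-1} \in \PG_2(F)$ satisfies $g(x) = x$ and $g(y) = z$, proving condition (2).

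Having established condition (2), Proposition \ref{weak 2 transitivity} gives condition (1), which is precisely the definition of weak two-transitivity (Definition \ref{def wk k} with $k = 2$). There is essentially no obstacle here: the entire proof is a formal application of already proven statements, and the only nontrivial input is sphere-transitivity of the maximal compact $\overline{K}$, which was recorded in Theorem \ref{thm 2}(2). The mild bookkeeping step is the conjugation trick to move the common vertex to the basepoint $x_0$ so that Theorem \ref{thm 2}(2) applies directly.
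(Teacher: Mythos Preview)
Your proof is correct and follows the same approach as the paper: invoke Proposition~\ref{closure} for closedness, Theorem~\ref{thm 2}(1) for vertex-transitivity, and Theorem~\ref{thm 2}(2) for sphere-transitivity, then apply Proposition~\ref{weak 2 transitivity}. The only difference is that you spell out the conjugation step moving an arbitrary centre $x$ to the basepoint $x_{0}$, whereas the paper leaves this implicit in the phrase ``$\PG_{2}(F)$ acts transitively on spheres in the sense of Proposition~\ref{weak 2 transitivity}(2)''; your added detail is justified, since Theorem~\ref{thm 2}(2) is stated only for spheres centred at $x_{0}$.
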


\begin{proof}
We know by Proposition \ref{closure} that $\PG_2(F)$ is closed in $\A(X_{\scriptscriptstyle{\PG_{2}(F)}})$. Since $\PG_2(\mathfrak{o}) \subset \PG_2(F)$, we have that $\PG_{2}(F)$ acts transitively on spheres (by Theorem \ref{thm 2}($2$)) in the sense
of Proposition \ref{weak 2 transitivity}($2$). 
Therefore, it is a weakly two-transitive subgroup of $\A(X_{\scriptscriptstyle\PG_{2}(F)})$. 
\end{proof}

\vspace{0.2 cm}
As a corollary we obtain.
  
\begin{cor}\label{G hat transitivity}
The subgroup $\mbox{\^{G}}^{(e)} \subset \A(X_{\scriptscriptstyle\PG_{2}(F)})$ is weakly two-transitive.   
\end{cor}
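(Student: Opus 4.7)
The plan is to observe that weak two-transitivity is a purely existential condition on a group: if $\mathcal{G}_1 \subset \mathcal{G}_2$ and $\mathcal{G}_1$ is weakly two-transitive, then so is $\mathcal{G}_2$, because the witness supplied by $\mathcal{G}_1$ already lies in $\mathcal{G}_2$. The definition of $\mbox{\^{G}}^{(e)}$ trivially contains $\PG_{2}(F)$, since any $g' \in \PG_{2}(F)$ may serve as its own local approximant on any ball $B(\eta,e)$. So given vertex pairs $(x_1,x_2)$ and $(y_1,y_2)$ in $X^0$ with $d(x_1,x_2) = d(y_1,y_2)$, Proposition \ref{PG weak 2 trans} produces $g \in \PG_{2}(F)$ with $g(x_i) = y_i$ for $i=1,2$, and this same $g$ lies in $\mbox{\^{G}}^{(e)}$.

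Alternatively, if one prefers the route via Proposition \ref{weak 2 transitivity}, one checks the hypotheses for $\mbox{\^{G}}^{(e)}$ directly: it is a closed subgroup of $\A(X_{\scriptscriptstyle\PG_{2}(F)})$ by Proposition \ref{G hat closed}, and it acts transitively on $X^0$ because the smaller group $\PG_{2}(F) \subset \mbox{\^{G}}^{(e)}$ already does so by Theorem \ref{thm 2}(1). One then verifies condition (2) of Proposition \ref{weak 2 transitivity}: given $x, y, z \in X^0$ with $d(x,y) = d(x,z)$, transport $x$ to the basepoint $x_0$ by an element of $\PG_{2}(F)$, apply Theorem \ref{thm 2}(2) inside $\overline{K}$ to move the image of $y$ to the image of $z$, and transport back. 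The resulting element lies in $\PG_{2}(F) \subset \mbox{\^{G}}^{(e)}$ and has the required properties.

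I expect no real obstacle: the corollary is essentially a formal consequence of the inclusion $\PG_{2}(F) \subset \mbox{\^{G}}^{(e)}$ together with Proposition \ref{PG weak 2 trans}, and the only point worth making explicit is that this inclusion is immediate from Definition \ref{G hat}.
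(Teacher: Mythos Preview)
Your proof is correct and matches the paper's own argument essentially verbatim: the paper simply notes that $\PG_{2}(F) \subset \mbox{\^{G}}^{(e)}$ and invokes Proposition~\ref{PG weak 2 trans}, exactly as in your first paragraph. Your alternative route via Proposition~\ref{weak 2 transitivity} is also valid but is more than the paper bothers with, since the monotonicity of weak two-transitivity under inclusion already settles the matter.
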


\begin{proof}
For any $e \geq 1$, the subgroup $\mbox{\^{G}}^{(e)}$ contains $\PG_{2}(F)$ which by Proposition \ref{PG weak 2 trans}, is weakly two-transitive. 
Hence the subgroup $\mbox{\^{G}}^{(e)}$ is weakly two-transitive.
\end{proof}

\subsection{Decompositions of $\mbox{\^{G}}^{(e)}$}

Now we prove the decompositions for $\mbox{\^{G}}^{(e)}$ parallel to the classical decompositions of $\G_{2}(F)$ stated in Theorem \ref{thm 1}. 
In contrast to the proof of the decompositions for $\G_{2}(F)$ which requires explicit computations with matrices (see \cite{B-H}), the proof for $\mbox{\^{G}}^{(e)}$ is purely geometric. 
Some parts of the proof given are similar to that of Theorem $1.5.2$ in \cite {Ch}.
Since our assumptions are different (see Proposition \ref{weak 2 transitivity}), we include these parts for the sake of completion. 

\vspace{0.2 cm}
Let $e \geq 1$. Denote by $\mbox{\^{G}} = \mbox{\^{G}}^{(e)}$. 
Let $|({x}_{n})|_{n\in \mathbb{Z}} = [\omega^{\prime}, \omega]$ be a fixed apartment with end points $\omega ^{\prime} = |({x}_{-n})|_{n \in \mathbb{N}}$ and $\omega = |({x}_{n})|_{n \in \mathbb{N}}$. 
We denote by $\mbox{\^{B}}$, the stabiliser in $\mbox{\^{G}}$ of the end $\omega$ and by $\mbox{\^{B}}^{\prime}$, the stabiliser of the end $\omega^{\prime}$. 
Let $(x_{n})_{n \in \mathbb{N}}$ be an infinite path in the equivalence class of $\omega$ and $g \in \mbox{\^{B}}$.
Then for $n \gg 0$, $g(x_{n}) = x_{n+k}$, for a fixed $k \in \mathbb{Z}$. 
The integer $k$ does not depend on the chosen representative of the class $\omega$. 

\begin{defi}\label{main defi} Define,   
\begin{align*}
&\mbox{\^{N}}: = \{ b\in \mbox{\^{B}} : b(x_{i}) = x_{i}, \mbox{ for some } {i} \in \mathbb{Z} \},\\
&\mbox{\^{N}}_{k}:= \{n \in \mbox{\^{N}} : n\vert_{[x_{k}, \omega]} = Id \},\\
&\mbox{\^{H}}:= \{ n \in \mbox{\^{N}}: n(x_{k}) = x_{k},  \ \forall \ k\in \mathbb{Z}\} = \displaystyle\bigcap_{k \in \mathbb{Z}}\mbox{\^{N}}_{k},\\
&\mbox{\^{K}}_{x_{0}}:= \{g \in  \mbox{\^{G}} : g(x_{0}) = x_{0}\},
\end{align*}

and the Iwahori subgroup \[\hspace{-18mm}\mbox{\^{I}}: = \{g \in  \mbox{\^{G}} : g(x_{o}) = x_{0}, \\\ g(x_{1}) = x_{1}\}.\]    
\end{defi}

\begin{lem}\label{a, t exist}
There exist elements $\alpha$ and $\tau$ in $\mbox{\^{G}}$ such that \[\alpha(x_{n}) = x_{-n}, \ \ \ \tau(x_{n}) = x_{n+1}  \ \ \   \forall \  n \in \mathbb{Z}. \]
\end{lem}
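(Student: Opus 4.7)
\emph{Plan.} The strategy is to exhibit $\alpha$ and $\tau$ as conjugates of two explicit elements of $\PG_{2}(F) \subset \mbox{\^{G}}^{(e)}$ that are visible on the standard apartment of $X_{\scriptscriptstyle\PG_{2}(F)}$, and then transport them to the given apartment $[\omega',\omega]$ using weak two-transitivity.

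\textit{Step 1 (the standard case).} First I would work on the standard apartment $\{x_n^{\mathrm{st}}\}_{n \in \mathbb{Z}}$, where $L_n^{\mathrm{st}} = \mathfrak{o}e_1 + \mathfrak{o}\varpi^n e_2$. The image $\alpha_0 \in \PG_{2}(F)$ of $s = \left(\begin{array}{cc} 0 & 1 \\ 1 & 0 \end{array}\right)$ swaps $e_1$ and $e_2$, hence sends $L_n^{\mathrm{st}}$ to $\mathfrak{o}e_2 + \mathfrak{o}\varpi^n e_1 = \varpi^n L_{-n}^{\mathrm{st}}$, giving $\alpha_0(x_n^{\mathrm{st}}) = x_{-n}^{\mathrm{st}}$. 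The image $\tau_0 \in \PG_{2}(F)$ of $\left(\begin{array}{cc} 1 & 0 \\ 0 & \varpi \end{array}\right)$ sends $L_n^{\mathrm{st}}$ to $\mathfrak{o}e_1 + \mathfrak{o}\varpi^{n+1}e_2 = L_{n+1}^{\mathrm{st}}$, giving $\tau_0(x_n^{\mathrm{st}}) = x_{n+1}^{\mathrm{st}}$ for every $n \in \mathbb{Z}$.

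\textit{Step 2 (transport to the given apartment).} Let $\omega_0,\omega_0'$ denote the two ends of the standard apartment. By Corollary \ref{G hat transitivity}, $\mbox{\^{G}}^{(e)}$ is weakly two-transitive, so it satisfies condition (3) of Proposition \ref{weak 2 transitivity}. Applied to the standard apartment $[\omega_0',\omega_0]$ with base vertex $x_0^{\mathrm{st}}$ and the given apartment $[\omega',\omega]$ with base vertex $x_0$, this produces $h \in \mbox{\^{G}}^{(e)}$ with $h(x_0^{\mathrm{st}}) = x_0$, $h(\omega_0) = \omega$ and $h(\omega_0') = \omega'$. Such an $h$ carries the standard apartment onto $[\omega',\omega]$; since an isometry of a doubly infinite geodesic that fixes both ends and one interior vertex preserves the labeling centred at that vertex, one concludes $h(x_n^{\mathrm{st}}) = x_n$ for every $n \in \mathbb{Z}$.

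\textit{Step 3 (conclusion).} Define $\alpha := h\alpha_0 h^{-1}$ and $\tau := h\tau_0 h^{-1}$; both lie in $\mbox{\^{G}}^{(e)}$, which is a group containing $h$, $\alpha_0$ and $\tau_0$. For each $n \in \mathbb{Z}$, applying Step 1 and the identity $h^{-1}(x_n) = x_n^{\mathrm{st}}$ gives
\[ \alpha(x_n) = h\alpha_0(x_n^{\mathrm{st}}) = h(x_{-n}^{\mathrm{st}}) = x_{-n}, \qquad \tau(x_n) = h\tau_0(x_n^{\mathrm{st}}) = h(x_{n+1}^{\mathrm{st}}) = x_{n+1}, \]
as required. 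The only delicate point is the claim in Step 2 that $h$ is index-preserving along the apartment: fixing only the two ends would still permit an arbitrary translation along the line, and it is precisely the additional condition $h(x_0^{\mathrm{st}}) = x_0$ supplied by part (3) of Proposition \ref{weak 2 transitivity} that rigidifies the labeling.
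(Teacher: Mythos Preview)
Your argument is correct. It differs from the paper's in that the paper never leaves the given apartment $[\omega',\omega]$: it simply applies condition~(3) of Proposition~\ref{weak 2 transitivity} \emph{to that apartment against itself}. Taking $[\omega',\omega]$ with base point $x_0$ on one side and $[\omega,\omega']$ with base point $x_0$ on the other yields $\alpha$ directly; taking $[\omega',\omega]$ with base point $x_0$ against $[\omega',\omega]$ with base point $x_1$ yields $\tau$ directly. No explicit matrices and no auxiliary standard apartment are needed.

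Your route---building $\alpha_0,\tau_0\in\PG_2(F)$ explicitly on the standard apartment and then conjugating by an $h$ produced from Proposition~\ref{weak 2 transitivity}(3)---buys you something the paper's argument does not: it shows concretely that $\alpha$ and $\tau$ arise from the familiar elements $s$ and $\mathrm{diag}(1,\varpi)$, and (since one may choose $h\in\PG_2(F)$ by Proposition~\ref{PG weak 2 trans}) that $\alpha,\tau$ can in fact be taken inside $\PG_2(F)$. The cost is a small amount of extra bookkeeping in Step~2, which you handle correctly: fixing the two ends alone only pins $h$ down up to translation along the line, and the extra requirement $h(x_0^{\mathrm{st}})=x_0$ removes that freedom.
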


\begin{proof}
From Corollary \ref{G hat transitivity}, we know that the group $\mbox{\^{G}}$ is weakly two-transitive. 
Since both the apartments $[\omega, \omega^{\prime}]$ and $[\omega^{\prime}, \omega]$ contain $x_{0}$, by weak two transitivity,  there exists $\alpha$ in $\mbox{\^{G}}$ such that $\alpha(x_{0}) = x_{0}$ and $\alpha(\omega) =  \omega^{\prime}$.
Therefore, $\alpha(x_{n}) = x_{-n}$ for all $n \in \mathbb{Z}$. 
From the weak two transitivity of $\mbox{\^{G}}$,  there also exists an element $\tau$ in $\mbox{\^{G}}$ such that $\tau(x_{0}) = x_{1}$ and $\tau(\omega) = \omega$ as both $x_{0}$ and $x_{1}$ are in $[\omega^{\prime}, \omega]$.
This ends the proof of the lemma.            
\end{proof}

\begin{lem}\label{x0 x1} Given two vertices $x_{0}, x_{1}$ and an end $\sigma$, we have two alternatives, either $x_{0} \in [x_{1}, \sigma]$ or $x_{1} \in [x_{0}, \sigma]$. 
\end{lem}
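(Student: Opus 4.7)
The plan is to use the fact, implicit in the notation carried over from the fixed apartment $[\omega',\omega]$, that $x_0$ and $x_1$ are adjacent vertices, together with the defining tree property that between any two vertices there is a \emph{unique} path (Remark \ref{apart}(1) guarantees the existence and uniqueness of the ray $[y,\sigma]$ for any $y$).

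First I would fix an end $\sigma$ and consider the unique infinite path $[x_1,\sigma]=(y_0,y_1,y_2,\ldots)$ starting at $y_0:=x_1$. The second vertex $y_1$ is one of the $q+1$ neighbours of $x_1$. I then split into two cases according to whether $y_1=x_0$ or not.

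In the first case, $x_0=y_1\in[x_1,\sigma]$ directly, and we are done. In the second case $y_1\neq x_0$, I form the sequence $(x_0,x_1,y_1,y_2,\ldots)$ by prepending the edge $\{x_0,x_1\}$ to $[x_1,\sigma]$. Consecutive vertices are adjacent by construction, so this is a walk; moreover it is a path (i.e.\ the vertices are all distinct), because if $x_0$ coincided with some $y_i$ for $i\geq 1$, then the unique geodesic from $x_0$ to $y_i$ would be a single vertex or would have to coincide with the subpath $(x_0,x_1,y_1,\ldots,y_i)$, and either conclusion contradicts $y_1\neq x_0$ together with absence of cycles in the tree. Hence $(x_0,x_1,y_1,y_2,\ldots)$ is an infinite path starting at $x_0$, and it agrees with $[x_1,\sigma]$ from index $1$ onward, so it lies in the equivalence class $\sigma$. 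By the uniqueness part of Remark \ref{apart}(1), this path equals $[x_0,\sigma]$, and in particular $x_1\in[x_0,\sigma]$.

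The only step requiring care is verifying that the prepended sequence is injective; this is the routine but crucial application of the no-cycles property of the tree, and is the main (essentially the only) obstacle in an otherwise purely combinatorial argument.
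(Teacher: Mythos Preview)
Your argument is correct. Both proofs rely on the standing assumption that $x_0$ and $x_1$ are adjacent, but the routes differ. The paper's proof is metric: it picks a vertex $v_0$ in $[x_0,\sigma]\cap[x_1,\sigma]$, sets $d(x_i,\sigma):=d(x_i,v_0)$, and then squeezes with triangle inequalities, using that $d(x_0,x_1)=1$ together with the observation that two adjacent vertices in a tree cannot be equidistant from a third vertex to force $d(x_1,\sigma)+d(x_0,x_1)=d(x_0,\sigma)$, hence $x_1\in[x_0,\sigma]$. Your proof is purely combinatorial: you look at the second vertex $y_1$ of the ray $[x_1,\sigma]$, and if $y_1\neq x_0$ you prepend the edge $\{x_0,x_1\}$ and invoke acyclicity plus the uniqueness in Remark~\ref{apart}(1) to identify the resulting ray with $[x_0,\sigma]$. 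Your approach is slightly more elementary and self-contained, since it avoids introducing the auxiliary meeting vertex $v_0$ and the parity fact about distances from adjacent vertices; the paper's approach, on the other hand, is phrased so that it visibly generalises to any two vertices (one shows the geodesic $[x_0,x_1]$ meets the ray $[x_1,\sigma]$ in an initial segment), whereas yours is tailored to the adjacent case actually needed in Theorem~\ref{main thm}.
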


\begin{proof} Suppose $x_{0} \notin [x_{1}, \sigma]$. Let $v_0$ be in the intersection of $[x_0,\sigma]$ and $[x_1,\sigma]$. Denote by $d(x_i,\sigma):=d(x_i,v_0)$.
Then, \[d(x_{1}, \sigma) < d(x_{0}, \sigma) + d(x_{0}, x_{1})\] \[\Rightarrow \ \ \ \ d(x_{1}, \sigma) + d(x_{0}, x_{1}) \leq d(x_{0}, \sigma) + 1. \ \ \ \ \ \ \ \  (\ast) \ \]
Equality in ($\ast$) implies that \[d(x_{1}, \sigma) = d(x_{0}, \sigma), \] which is absurd as $[x_{0}, x_{1}]$ is an edge.
Therefore,  we have \[d(x_{0}, \sigma) \overset {(1)}{\leq} d(x_{1}, \sigma) + d(x_{0}, x_{1})\leq d(x_{0}, \sigma).\] 
where $(1)$ is the triangle inequality. 
Hence, \[d(x_{1}, \sigma) + d(x_{0}, x_{1}) = d(x_{0}, \sigma). \] 
Therefore,  $x_{1} \in [x_{0}, \sigma]$.
We can use the same argument to show that if $x_{1} \notin [x_{0}, \sigma]$,  then $x_{0} \in [x_{1}, \sigma]$. 
\end{proof}

\begin{thm} \label{main thm} Using the notations of Definition \ref{main defi}, we have the following. 
\begin {enumerate}
 
\item The Iwasawa decomposition $\mbox{\^{G}} = \mbox{\^{K}}_{x_{0}}\mbox{\^{B}}$. 
\item The Cartan decomposition $\mbox{\^{G}} = \displaystyle\bigsqcup_{n \in \mathbb{N}}\mbox{\^{K}}_{x_{0}}{\tau}^{n}\mbox{\^{K}}_{x_{0}}$. 
\item The Bruhat decomposition $\mbox{\^{G}} = \mbox{\^{N}}\alpha \mbox{\^{B}}\sqcup \mbox{\^{B}}$. 
Also,  \[n_{1}\alpha \mbox{\^{B}} \cap n_{2}\alpha \mbox{\^{B}} \neq \varnothing \Leftrightarrow n_{1}\mbox{\^{H}} = n_{2}\mbox{\^{H}}.\]

\item The Levi decomposition $\mbox{\^{B}} = \mbox{\^{T}}\mbox{\^{N}}$, where $\mbox{\^{T}} = \{\tau^{n}: n \in \mathbb{Z} \}$.
\item The Iwahori group admits a decomposition $\mbox{\^{I}} = (\mbox{\^{I}} \cap \mbox{\^{B}})(\mbox{\^{I}} \cap \mbox{\^{B}}^{\prime})$ and we have the intersection $(\mbox{\^{I}} \cap \mbox{\^{B}}) \cap (\mbox{\^{I}} \cap \mbox{\^{B}}^{\prime}) = \mbox{\^{H}}$.
\item The decomposition $\mbox{\^{K}}_{x_{0}} = \mbox{\^{I}}\sqcup \mbox{\^{I}} \alpha \mbox{\^{I}}$ and the decomposition $\mbox{\^{G}} = \mbox{\^{I}}\mbox{\^{B}}\sqcup \mbox{\^{I}}\alpha \mbox{\^{B}}$.
\item The indices $[\mbox{\^{N}}_{k}: \mbox{\^{N}}_{k-1}] = q$  for all $k \in \mathbb{Z}$.   

\end{enumerate}
\end {thm}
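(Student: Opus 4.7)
The common strategy is to replace the matrix identities behind Theorem \ref{thm 1} by geometric moves: weak two-transitivity of $\mbox{\^{G}}$ (Corollary \ref{G hat transitivity}, via Proposition \ref{weak 2 transitivity}) supplies an element of $\mbox{\^{G}}$ carrying one configuration of ``a vertex plus a pair of ends'' to another, while $\tau$ and $\alpha$ from Lemma \ref{a, t exist} play the roles of the standard matrices. For (1), I set $\sigma = g(\omega)$, extend the half-line $[x_0,\sigma]$ to an apartment through $x_0$, and apply Proposition \ref{weak 2 transitivity}(3) to obtain $k \in \mbox{\^{K}}_{x_0}$ with $k(\omega) = \sigma$, so that $k^{-1}g \in \mbox{\^{B}}$. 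For (2), set $n = d(x_0, g(x_0))$; Proposition \ref{weak 2 transitivity}(2) gives $k_1 \in \mbox{\^{K}}_{x_0}$ with $k_1(x_n) = g(x_0)$, hence $g \in k_1 \tau^n \mbox{\^{K}}_{x_0}$, and disjointness of the double cosets is the $\mbox{\^{K}}_{x_0}$-biinvariance of $d(x_0, g(x_0))$. Part (4) is immediate: if $b \in \mbox{\^{B}}$ has translation constant $k$, then $\tau^{-k} b$ eventually fixes the ray $[x_N, \omega]$ and hence lies in $\mbox{\^{N}}$.

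For (3), if $g \notin \mbox{\^{B}}$ set $\sigma := g(\omega) \neq \omega$. When $\sigma = \omega'$ take $n = e$; otherwise $\omega, \omega', \sigma$ are pairwise distinct and, by Proposition \ref{crossroad 2}, meet at a unique crossroad $v$. Proposition \ref{weak 2 transitivity}(3) applied to the apartments $[\omega', \omega]$ and $[\sigma, \omega]$, both containing $v$, produces $n \in \mbox{\^{G}}$ with $n(v) = v$, $n(\omega) = \omega$, $n(\omega') = \sigma$, so $n \in \mbox{\^{N}}$ and $g \in n\alpha \mbox{\^{B}}$. Disjointness from $\mbox{\^{B}}$ is automatic since $n\alpha(\omega) = n(\omega') \neq \omega$. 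For the refinement one verifies $\alpha \mbox{\^{B}} \alpha^{-1} = \mbox{\^{B}}'$, so $n_1 \alpha \mbox{\^{B}} = n_2 \alpha \mbox{\^{B}}$ iff $n_2^{-1} n_1 \in \mbox{\^{N}} \cap \mbox{\^{B}}'$; any isometry fixing both ends of the standard apartment acts on it as a translation, and the extra condition from $\mbox{\^{N}}$ of fixing some $x_i$ makes this translation trivial, whence $\mbox{\^{N}} \cap \mbox{\^{B}}' = \mbox{\^{H}}$.

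Part (5) is the most delicate step. Given $g \in \mbox{\^{I}}$, one has $g^{-1}(\omega) \neq \omega'$ because $g$ fixes $x_0$ and $x_1$, so the half-line $[x_0, g^{-1}(\omega)]$ begins $(x_0, x_1, \dots)$; hence the apartments $[\omega', g^{-1}(\omega)]$ and $[\omega', \omega]$ both contain $x_0$. Proposition \ref{weak 2 transitivity}(3) supplies $b_-$ with $b_-(x_0) = x_0$, $b_-(\omega') = \omega'$, $b_-(g^{-1}(\omega)) = \omega$, and tracking the apartments forces $b_-(x_1) = x_1$ as well, so $b_- \in \mbox{\^{I}} \cap \mbox{\^{B}}'$; then $b_+ := g b_-^{-1}$ fixes $x_0, x_1, \omega$, i.e.\ $b_+ \in \mbox{\^{I}} \cap \mbox{\^{B}}$. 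The intersection identity is the special case of (3). For (6), I split on whether $k(x_1) = x_1$: if not, $d(x_1, k(x_1)) = 2$ and Proposition \ref{weak 2 transitivity}(1) yields $i_1 \in \mbox{\^{G}}$ with $i_1(x_1) = x_1$, $i_1(x_{-1}) = k(x_1)$; since $x_0$ is the unique midpoint of both geodesics, $i_1(x_0) = x_0$, so $i_1 \in \mbox{\^{I}}$ and $(i_1 \alpha)^{-1} k \in \mbox{\^{I}}$. For the $\mbox{\^{G}}$-decomposition, Lemma \ref{x0 x1} applied to $[x_0, g(\omega)]$ splits on whether this half-line passes through $x_1$: in the first case weak two-transitivity gives $i \in \mbox{\^{I}}$ with $i(\omega) = g(\omega)$ and $g \in \mbox{\^{I}} \mbox{\^{B}}$; in the second it gives $i \in \mbox{\^{I}}$ with $i(\omega') = g(\omega)$ and $g \in \mbox{\^{I}} \alpha \mbox{\^{B}}$.

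Finally, (7) is an orbit-stabilizer calculation for the action of $\mbox{\^{N}}_k$ on $x_{k-1}$: the stabilizer of $x_{k-1}$ inside $\mbox{\^{N}}_k$ is exactly $\mbox{\^{N}}_{k-1}$ (fixing $x_{k-1}$ together with the ray $[x_k, \omega]$ forces $[x_{k-1}, \omega]$ to be pointwise fixed), while the orbit is all $q$ neighbors of $x_k$ distinct from $x_{k+1}$, obtained by one last invocation of Proposition \ref{weak 2 transitivity}(3): for a chosen neighbor $v$, extend $v$ to an end $\sigma$ through $v$ and produce $n \in \mbox{\^{N}}_k$ with $n(\omega') = \sigma$. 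The main obstacle throughout is structural: Proposition \ref{weak 2 transitivity} only controls one vertex and a pair of ends, whereas (5) and (6) require an element that fixes two specific vertices; each time one must verify that the second vertex is forced by the labelled-apartment structure of the tree rather than being an extra hypothesis.
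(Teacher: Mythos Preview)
Your proposal is correct and follows essentially the same geometric strategy as the paper: each decomposition is obtained by invoking weak two-transitivity (Proposition \ref{weak 2 transitivity}) to manufacture an element of $\mbox{\^{G}}$ carrying one configuration of ``vertex plus pair of ends'' to another, with $\tau$ and $\alpha$ from Lemma \ref{a, t exist} playing the role of the standard representatives. The only differences are cosmetic---e.g.\ in (5) your $b_-$ is the inverse of the paper's $h$, and in (7) you phrase the argument as orbit--stabiliser rather than defining the bijection $\phi$ explicitly---and your closing remark about the ``second vertex'' being forced by the apartment structure makes explicit a point the paper leaves implicit.
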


\begin{proof}
We begin by showing that $\mbox{\^{K}}_{x_{0}}$ acts transitively on the set of ends $\Omega$ of the tree. 
Consider the vertex $x_{0}$ and ends $\omega$ and $\sigma$. By Remark \ref{apart}, there exist infinite paths $[x_{0}, \omega]$ and $[x_{0}, \sigma]$. 
From the weak two transitivity of $\mbox{\^{G}}$, there exists an element $g \in  \mbox{\^{G}}$ such that $g(x_{0}) = x_{0}$ and $g(\omega) = \sigma$. 
Since $x_{0}$ is fixed,  such a $g$ is necessarily in $\mbox{\^{K}}_{x_{0}}$.
Hence, $\mbox{\^{K}}_{x_{0}}$ acts transitively on the boundary. 

\vspace{0.2 cm}
Now consider an element $g \in \mbox{\^{G}}$ and the end $g(\omega)$. 
Since $\mbox{\^{K}}_{x_{0}}$ acts transitively on $\omega$,  we have for any $g$,  $g(\omega) = k(\omega)$ for some $k$ in $\mbox{\^{K}}_{x_{0}}$. 
Hence,  $k^{-1}g$ fixes $\omega$,  therefore there exists $b \in \mbox{\^{B}}$ such that $k^{-1}g = b$ which implies $g= kb$.
Since $g$ was arbitrary,  we have the Iwasawa decomposition $\mbox{\^{G}} = \mbox{\^{K}}_{x_{0}}\mbox{\^{B}}$.

\vspace{0.2 cm}
Let $g\in \mbox{\^{G}}$ be arbitrary. 
If $g$ fixes $x_{0}$,  then $g$ belongs to $\mbox{\^{K}}_{x_{0}}$ and the result is clear. 
If not,  let $d(x_{0}, g(x_{0}))= n $. 
From the transitive action of $\mbox{\^{K}}_{x_{0}}$ on $\Omega$ (shown above), it follows that it acts transitively on each sphere around ${x_{0}}$ (Proposition \ref{PG weak 2 trans}). 
Therefore,  there exists $k \in \mbox{\^{K}}_{x_{0}}$ such that $k(g(x_{0})) = x_{n}$. 
Applying $\tau^{-n}$ gives us an element $\tau^{-n}kg \in \mbox{\^{K}}_{x_{0}}$, since $\tau^{-n}kg$ fixes $x_{0}$. 
Hence,  $g$ is an element of $\mbox{\^{K}}_{x_{0}}\tau^{n}\mbox{\^{K}}_{x_{0}}$. 
Note that the cosets $\mbox{\^{K}}_{x_{0}}\tau^{n}\mbox{\^{K}}_{x_{0}}$ are disjoint for different $n$.
This gives us the Cartan decomposition for $\mbox{\^{G}}$.

\vspace{0.2 cm}
To show the Bruhat decomposition for $\mbox{\^{G}}$, it suffices to prove that if $\sigma$ is an end different from $\omega$,  there exists $n \in \mbox{\^{N}}$ such that $n(\alpha(\omega)) = n(\omega^{\prime}) = \sigma$.
If $\sigma = \omega^{\prime}$,  then we can take $n = \mathrm{Id} \in \mbox{\^{N}}$. 
Otherwise,  we consider the crossroad $x_{k}$ of the three ends $\omega, \omega^{\prime}, \sigma$. 
We know from Proposition \ref{crossroad 2} that such an element exists.
Using the weak two transitivity of $\mbox{\^{G}}$  we have an element $g \in\mbox{\^{G}}$ such that $g[\omega^{\prime},  \omega] = [\sigma, \omega] $ and $g(x_{k}) = x_{k}$.  
As $g$ fixes $\omega$ and $x_{k}$,  it follows $g\in \mbox{\^{N}}$. 
Therefore,  we have the Bruhat decomposition $\mbox{\^{G}} = \mbox{\^{N}} \alpha \mbox{\^{B}}\sqcup \mbox{\^{B}}$. 
Moreover, 
 \[n_{1}\alpha \mbox{\^{B}} \cap n_{2} \alpha \mbox{\^{B}} \neq \varnothing \Leftrightarrow n_{1}(\omega^{\prime}) = n_{2}(\omega^{\prime}) \Leftrightarrow n_{1}^{-1}n_{2}(\omega^{\prime}) = \omega^{\prime}.\] 
This implies that $ n = n_{1}^{-1}n_{2}$ stabilises the apartment $[\omega^{\prime}, \omega]$ since $n^{-1}_{1}n_{2} \in \mbox{\^{B}}^{\prime} \cap \mbox{\^{B}}$. 
As $n \in \mbox{\^{N}}$ it also fixes this apartment.
Since $\mbox{\^{H}}$ is the maximal subgroup fixing the apartment $[\omega^{\prime},  \omega]$,  it implies that $n_{1}^{-1}n_{2}\in \mbox{\^{H}}$. 
Hence, we have the equality $n_{1}\mbox{\^{H}} = n_{2}\mbox{\^{H}}$. 

\vspace{0.2 cm}
Let $g \in \mbox{\^{B}}$. Then $g$ cannot act by inversion since it stabilises an end. 
Hence,  $g$ is either elliptic or hyperbolic (Remark \ref{auto types}). 
If it is elliptic then it is in the subgroup $\mbox{\^{N}}$ by definition.
If not,  it is hyperbolic and therefore there exists $m_{0} \in \mathbb{N}$ and $n \in \mathbb{Z}$ such that 
      \[ g(x_{m}) = x_{m+n},  \ \ \ \ \forall \ m > m_{0}. \] 
 This implies that $\tau^{-n}g$ is elliptic and therefore an element of $\mbox{\^{N}}$ i.e.,  $\tau^{-n}g = h$ for some $h \in \mbox{\^{N}}$.
Therefore $g = {\tau}^{n}h$. This proves the Levi decomposition. 

\vspace{0.2 cm}
Let $g\in \mbox{\^{I}}$ and $g^{-1}(\omega) = \sigma$. 
By definition $g$ fixes $x_{0}$ and $x_{1}$.
We consider the image of $[x_{0}, \omega]$ under $g$. It is the infinite path $[x_{0}, \sigma]$ which contains $x_{1}$. 
Since $\mbox{\^{G}}$ is weakly two-transitive,  there exists an element $h \in \mbox{\^{G}}$ which maps $[\omega^{\prime}, \omega]$ to $[\omega^{\prime}, \sigma]$ and $x_{0}$ to itself. 
As $h$ fixes $x_{0}$ as well as $x_{1}$,  $h \in \mbox{\^{I}}$.  
By construction $h(\omega^{\prime}) = \omega^{\prime}$,  therefore $h \in \mbox{\^{I}} \cap \mbox{\^{B}}^{\prime}$. 
This implies $h^{-1}\in \mbox{\^{I}}\cap\mbox{\^{B}}^{\prime}$,  as $\mbox{\^{I}}\cap\mbox{\^{B}}^{\prime}$ is a group. 
Now $gh(\omega) = g(\sigma) = \omega$.
Therefore $gh \in \mbox{\^{I}} \cap \mbox{\^{B}}$, so $ghh^{-1}  = g \in (\mbox{\^{I}}\cap \mbox{\^{B}})(\mbox{\^{I}} \cap B^{\prime})$.
Hence, the Iwahori subgroup  $\mbox{\^{I}}$ admits the decomposition $\mbox{\^{I}} = (\mbox{\^{I}}\cap \mbox{\^{B}})(\mbox{\^{I}} \cap B^{\prime})$.

\vspace{0.2 cm}
Since the group $\mbox{\^{B}} \cap \mbox{\^{B}}^{\prime}$ stabilises the ends of the geodesic $[\omega^{\prime}, \omega]$,  its intersection with $\mbox{\^{I}}$ fixes a vertex. 
Therefore,  $\mbox{\^{B}} \cap \mbox{\^{B}}^{\prime} \cap \mbox{\^{I}}$ fixes the apartment $[\omega^{\prime}, \omega]$ and is contained in the group $\mbox{\^{H}}$. 
Since $\mbox{\^{H}}$ is contained in $\mbox{\^{B}}, \mbox{\^{B}}^{\prime}, \mbox{\^{I}}$,  we conclude $(\mbox{\^{I}} \cap \mbox{\^{B}}) \cap (\mbox{\^{I}} \cap \mbox{\^{B}}^{\prime}) = \mbox{\^{H}}$.

\vspace{0.2 cm}
In order to prove that $\mbox{\^{K}}_{x_{0}}$ has the decomposition $\mbox{\^{I}}\sqcup \mbox{\^{I}} \alpha \mbox{\^{I}}$,  we consider an element $k \in \mbox{\^{K}}_{x_{0}}$ and the vertex $k(x_{1})$. 
If $k(x_{1}) = x_{1}$,  then $k$ is in the Iwahori subgroup $\mbox{\^{I}}$. 
Otherwise,  we can apply  weak two transitivity of $\mbox{\^{G}}$ to the paths $[x_{-1}, x_{1}]$ and $[k(x_{1}), x_{1}]$, both of which contain $x_{0}$. 
Hence,  there exists an element $i \in \mbox{\^{I}}$  such that $i(x_{-1}) = k(x_{1})$ and $i(x_{1}) = x_{1}$. 
Since $i\alpha$ and $k$ have the same image on $x_{0}$ and $x_{1}$,  we have $k \in i\alpha \mbox{\^{I}}$.
Hence, $\mbox{\^{K}}_{x_{0}} = \mbox{\^{I}}\sqcup\mbox{\^{I}}\alpha\mbox{\^{I}}$.

\vspace{0.2 cm}
We now prove the decomposition $\mbox{\^{G}} = \mbox{\^{I}}\mbox{\^{B}}\sqcup\mbox{\^{I}}\alpha\mbox{\^{B}}$. 
Let $g \in \mbox{\^{G}}$ and $\sigma = g(\omega)$
Using Lemma \ref{x0 x1}, we have two cases. 

\vspace{0.2 cm}
\begin{case} Assume $x_{1} \in [x_{0}, \sigma]$. Since $\mbox{\^{K}}_{x_{0}}$ has transitive action on the set of ends $\Omega$, there exists  an element $h \in \mbox{\^{K}}_{x_{0}}$ such that $h([x_{0},\omega]) = [x_{0},\sigma]$. 
In particular, $h(x_{1}) = x_{1} $ and $h(\omega) = \sigma$. 
Therefore, $h \in \mbox{\^{I}}$. 
Since the image of $\omega$ is the same under $g$ and $h$ differing only by an element of $\mbox{\^{B}}$,  it follows that $h^{-1}g \in \mbox{\^{B}}$. Thus,  $g \in \mbox{\^{I}}\mbox{\^{B}}$.
\end{case}

\vspace{0.2 cm}
\begin{case} Assume $x_{0} \in [x_{1}, \sigma]$. The subgroup in $\mbox{\^{G}}$ stabilizing $x_{1}$,  denoted $\mbox{\^{K}}_{x_{1}}$ has a transitive action on $\Omega$ for the same reason as the subgroup $\mbox{\^{K}}_{x_{0}}$. 
Hence, there exists an element $p \in \mbox{\^{G}}$ such that $p(x_{1}) = x_{1}$ and $p(\omega^{\prime}) = \sigma$. 
Since $x_{0}$ is contained in both the infinite paths $[x_{1}, \omega^{\prime}]$ and $p([x_{1}, \omega^{\prime}]) = [x_{1}, \sigma]$,  we can deduce that $p \in \mbox{\^{I}}$. 
As the image of $\omega$ is the same under $g$ and $p\alpha$, it follows that $g\in \mbox{\^{I}}\alpha B$. 

\vspace{0.2 cm}
\noindent Therefore, $\mbox{\^{G}} = \mbox{\^{I}}\mbox{\^{B}}\sqcup \mbox{\^{I}}\alpha \mbox{\^{B}}$. 
\end{case}

\vspace{0.2 cm}
 Consider the map \begin {align*}
                      \phi: \mbox{\^{N}}_{k}/\mbox{\^{N}}_{k-1} & \xrightarrow{\sim} S(x_{k}, 1) -\{x_{k+1}\} \\
                                                   g & \longmapsto     g(x_{k-1}).
\end{align*}
We claim $\phi$ is a bijection. Indeed, note that \[ g_{1}(x_{k-1}) = g_{2}(x_{k-1}) \Rightarrow  g_{1}^{-1}g_{2}(x_{k-1}) = x_{k-1}.\]
This implies $g_{1}^{-1}g_{2} \in \mbox{\^{N}}_{k-1}$.
Hence,  they define the same class on $\mbox{\^{N}}_{k}/\mbox{\^{N}}_{k-1}$ and therefore the map $\phi$ is injective.
So to prove that the index $[\mbox{\^{N}}_{k}:\mbox{\^{N}}_{k-1}] = q$, it suffices to show the transitivity of $\mbox{\^{N}}_{k-1}$ on $S(x_{k}, 1) -\{x_{k+1}\}$, which is a set with cardinality $q$. 

\vspace{0.2 cm}
Consider a vertex $u \in S(x_{k}, 1) - \{x_{k+1}\}$ and the infinite paths $[x_{k-1},  \omega]$ and $[u, \omega]$. 
From the weak two transitivity of $\mbox{\^{G}}$,  there exists $g \in \mbox{\^{G}}$ such that $g(x_{k-1}) = u$ and $g(\omega) =\omega$. 
Since $g(x_{k}) = x_{k}$,  it implies $g \in \mbox{\^{N}}_{k}$.
This proves the transitivity of $\mbox{\^{N}}_{k}$ on $S(x_{k}, 1)- \{x_{k+1}\}$.
Hence,  $[\mbox{\^{N}}_{k}:\mbox{\^{N}}_{k-1}] = q$.
 
\end{proof}

\section{Closing Remarks}
\begin{enumerate}

\item Theorem \ref{main thm} applies to any closed transitive subgroup of the automorphism group of any Bruhat-Tits tree. 
In particular, we get the classical decomposition of $\G_{2}(F)$ by a geometric proof.

\item As in the case of $\G_{2}(F)$, one can define for a vertex $x_{0}$, the $mth$ congruence subgroups in $\mbox{\^{G}}$, i.e. those elements which stabilize a ball of radius $m$ centred at $x_{0}$.
 This has been done in \cite{K}, where we also study certain properties of these subgroups..

\item Theorem \ref{main thm} and the previous remark suggest that one could study the representation theory of the group $\mbox{\^{G}}$ over $\overline{\mathbb{F}}_{p}$-vector spaces in an analogous manner to that of $\G_{2}(F)$ as covered in \cite{B-L1} and \cite{B-L2}.    
 
\end{enumerate}

\bibliographystyle{alpha}
\bibliography{masterbib}

\begin{thebibliography}{{Ol'}76}

\bibitem[BH06]{B-H}
C.~J. Bushnell and G.~Henniart.
\newblock {\em The local {L}anglands conjecture for $\G_{2}(F)$}, volume 335 of
  {\em Grundlehren der Mathematischen Wissenschaften}.
\newblock Springer-Verlag, 2006.

\bibitem[BK96]{B-K}
T.N. Bailey and A.W. Knapp.
\newblock {\em Representation theory and automorphic forms}.
\newblock Instructional conference, International Centre for Mathematical
  Sciences, Edinburgh, Scotland. 1996.

\bibitem[BL94]{B-L1}
L.~Barthel and R.~Livne.
\newblock Irreducible modular representations of {$\mathrm{GL}_{2}(F)$} of a
  local field.
\newblock {\em Duke Math. J.}, 75(2):261--292, 1994.

\bibitem[BL95]{B-L2}
L.~Barthel and R.~Livne.
\newblock Modular representations of {$\mathrm{GL}_{2}(F)$} of a local field:
  The ordinary, unramified case.
\newblock {\em Journal of Number Theory}, 55:1--27, 1995.

\bibitem[Cho94]{Ch}
F.M Choucroun.
\newblock {\em Analyse harmonique des groupes d'automorphisms d'arbres de
  {B}ruhat-{T}its}, volume~58.
\newblock Mémoires de la Société Mathématique de France, 1994.

\bibitem[DT07]{DT}
S.~Dasgupta and J.~Teitelbaum.
\newblock {\em The p-adic upper half plane}.
\newblock Lectures for the $(2007)$ Arizona Winter School, 2007.

\bibitem[Kau12]{K}
I.~Kaur.
\newblock Decomposition theorems for groups acting on the {B}ruhat-{T}its tree.
\newblock Master's thesis, Humboldt Universit{\"{a}}t zu Berlin, 2012.

\bibitem[{Ol'}76]{O}
G.I. {Ol'shanskii}.
\newblock Classification of irrreducible representations of groups of
  automorphisms of the {B}ruhat-{T}its trees.
\newblock {\em Funkts. Anal.i. Prilozh}, 11:32--42, 1976.

\bibitem[Ser03]{S}
J.~P. Serre.
\newblock {\em Trees}.
\newblock Springer Monographs in Mathematics. Springer-Verlag, 2003.

\end{thebibliography}

\vspace{0.4 cm}
\noindent{\small{Freie Universität Berlin, FB Mathematik und Informatik, Arnimallee 3, 14195 Berlin, Germany.
\\ \textit{E-mail address}: kaur@math.fu-berlin.de}}
\end{document}